\documentclass[11pt]{article}%
\usepackage{amsmath}
\usepackage{amsfonts}
\usepackage{amssymb}
\usepackage{amsthm}
\usepackage{graphicx}%
\setcounter{MaxMatrixCols}{30}
\newtheorem{theorem}{Theorem}[section]
\newtheorem{corollary}[theorem]{Corollary}

\newtheorem{proposition}[theorem]{Proposition}
\newtheorem{remark}[theorem]{Remark}

\theoremstyle{definition}
\newtheorem{definition}[theorem]{Definition}

\theoremstyle{remark}

\numberwithin{equation}{section}

\textwidth 165mm
\textheight 230.7mm
\oddsidemargin=0mm
\evensidemargin=0mm
\addtolength{\topmargin}{-1in}
\begin{document}

\title{A Neumann series of Bessel functions representation for solutions of
Sturm-Liouville equations}
\author{Vladislav V. Kravchenko and Sergii M. Torba\\{\small Departamento de Matem\'{a}ticas, CINVESTAV del IPN, Unidad
Quer\'{e}taro, }\\{\small Libramiento Norponiente No. 2000, Fracc. Real de Juriquilla,
Quer\'{e}taro, Qro. C.P. 76230 MEXICO}\\{\small e-mail: vkravchenko@math.cinvestav.edu.mx,
storba@math.cinvestav.edu.mx \thanks{Research was supported by CONACYT, Mexico
via the projects 166141 and 222478.}}}
\maketitle

\begin{abstract}
A Neumann series of Bessel functions (NSBF) representation for solutions of
Sturm-Liouville equations and for their derivatives is obtained. The
representation possesses an attractive feature for applications: for all real
values of the spectral parameter $\omega$ the difference between the exact
solution and the approximate one (the truncated NSBF) depends on $N$ (the
truncation parameter) and the coefficients of the equation and does not depend on $\omega$. A similar result is valid when $\omega\in\mathbb{C}$ belongs to a strip $\left\vert
\operatorname{Im}\omega\right\vert <C$. This feature makes the NSBF
representation especially useful for applications requiring computation of
solutions for large intervals of $\omega$. Error and decay rate estimates are
obtained. An algorithm for solving initial value, boundary value or spectral
problems for the Sturm-Liouville equation is developed and illustrated on a
test problem.

\end{abstract}

\section{Introduction}

In the recent work \cite{KNT} a new representation for solutions of the
one-dimensional Schr\"{o}dinger equation
\begin{equation}
-u^{\prime\prime}+Q(x)u=\omega^{2}u\label{Intro Schr}%
\end{equation}
was obtained in terms of so-called (see, e.g., \cite{Watson}, \cite{Wilkins}
and \cite{Baricz2012}) Neumann series of Bessel functions (NSBF). The
representation possesses an attractive feature for applications: for all
$\omega\in\mathbb{R}$ the difference between the exact solution and the
approximate one (the truncated NSBF) depends on $N$ (the truncation parameter) and
$Q$ and does not depend on $\omega$. A similar result is valid when $\omega
\in\mathbb{C}$ belongs to a strip $\left\vert \operatorname{Im}\omega
\right\vert <C$. This feature makes the NSBF representation especially useful
for applications requiring computation of solutions of (\ref{Intro Schr}) for
large intervals of $\omega$. For example, as was shown in \cite{KNT}, the NSBF
representation allows one to compute hundreds or if necessary even thousands
of eigendata with a nondeteriorating for large $\omega$ and remarkable
accuracy. In \cite{KTC} the NSBF representation was extended onto perturbed
Bessel equations.

In the present work we derive an NSBF representation for solutions of the
Sturm-Liouville equation
\begin{equation}
-(p(y)v^{\prime})^{\prime}+q(y)v=\omega^{2}r(y)v.\label{Intro SL}%
\end{equation}
The coefficients are assumed to admit the application of the Liouville
transformation (see, e.g., \cite{Everitt}, \cite{Zwillinger}). The main
result consists in an NSBF representation for solutions of (\ref{Intro SL})
and for their derivatives, preserving the same attractive feature described
above. Error and decay rate estimates are obtained. An algorithm for solving
initial value, boundary value or spectral problems for (\ref{Intro SL}) is
developed and illustrated on a test problem.

Besides this Introduction the paper contains the following sections. Section \ref{Section 2}
presents some well known facts concerning the Liouville transformation
together with recent results from \cite{KMT} showing how the system of formal
powers associated with (\ref{Intro SL}) is transformed by the Liouville
transformation. In Section \ref{Section 3} we recall some relevant results from \cite{KNT}
which are used in Section \ref{Section 4} and Section \ref{Section 5} for obtaining the main results of this work, the NSBF representation for solutions of (\ref{Intro SL}) as well
as for their derivatives. In Section \ref{Sect Computation formulas} convenient for computation formulas for the coefficients of the NSBF representations are derived. In Section \ref{SectNumAlg} a computational algorithm based on the NSBF representation is formulated and
discussed. Section \ref{Sect Num Examples} presents some illustrations of its numerical performance on a test problem admitting an exact solution. In Appendix A we prove error
and decay rate estimates of the NSBF representation.

\section{Preliminaries on the Liouville transformation}\label{Section 2}

\subsection{Definition of the Liouville
transformation\label{Subsect Def Liouville transform}}

Consider the Sturm-Liouville differential equation
\begin{equation}
(p(y)v^{\prime})^{\prime}-q(y)v=-\lambda r(y)v,\quad\text{ }y\in[A,B],
\label{SL}%
\end{equation}
with $[A,B]$ being a finite interval. Let $q:\left[  A,B\right]
\rightarrow\mathbb{C}$, $p$ and $r:\left[  A,B\right]  \rightarrow\mathbb{R}$
be such that $q\in C\left[  A,B\right] $, $p,p^{\prime},r,r^{\prime}\in
AC[A,B]$ and $p(y)>0,$ $r(y)>0$ for all $y\in\left[  A,B\right]  $. Define the
mapping $l:\left[  A,B\right]  \rightarrow\left[  0,b\right]  $ by
\begin{equation}
l(y):=\int_{A}^{y}\left\{  r(s)/p(s)\right\}  ^{1/2}ds, \label{l(y)}%
\end{equation}
where $b=\int_{A}^{B}\left\{  r(s)/p(s)\right\}  ^{1/2}ds$. Denote
$\rho(y):=(p(y)r(y))^{1/4}$. Then (\ref{SL}) is related to the one-dimensional
Schr\"{o}dinger differential equation
\begin{equation}
u^{\prime\prime}-Q(x)u=-\lambda u,\qquad x\in[0,b] \label{Schr}%
\end{equation}
by the Liouville transformation of the variables $y$ and $v$ into $x$ and $u$
defined as (see, e.g., \cite{Everitt}, \cite{Zwillinger})
\[
x=l(y),\qquad u(x):=\rho(y)v(y)\quad\text{ for all }y\in[A,B]
\]
and the coefficient $Q$ is given by the relation
\begin{equation}
\begin{split}
Q(x) & =\frac{q(y)}{r(y)} + \frac{p(y)}{4r(y)}\left[ \left( \frac{p^{\prime
}(y)}{p(y)}+\frac{r^{\prime}(y)}{r(y)}\right) ^{\prime}+\frac34 \left(
\frac{p^{\prime}(y)}{p(y)}\right) ^{2} + \frac12 \frac{p^{\prime}(y)}{p(y)}
\frac{r^{\prime}(y)}{r(y)} - \frac14 \left( \frac{r^{\prime}(y)}{r(y)}\right)
^{2}\right] \\
& =\frac{q(y)}{r(y)}-\frac{\rho(y)}{r(y)}\left[ p(y)\left( \frac1{\rho
(y)}\right) ^{\prime}\right] ^{\prime}\quad\text{ for all }y\in[A,B].
\end{split}
\label{Q}
\end{equation}

The Liouville transformation can be considered as an operator $\mathbf{L}%
:C[A,B]\rightarrow C[0,b]$ acting according to the rule%
\[
u(x)=\mathbf{L}[v(y)]=\rho(l^{-1}(x))v(l^{-1}(x)).
\]

Let us introduce the following notations for the differential expressions
\[
\mathbf{B}=-\frac{d^{2}}{dx^{2}}+Q(x)\quad\text{ and }\quad\mathbf{C}%
=-\frac{1}{r(y)}\left(  \frac{d}{dy}\left(  p(y)\frac{d}{dy}\right)
-q(y)\right)  .
\]

The following proposition summarizes the main properties of the operator
$\mathbf{L}$.

\begin{proposition}
\label{Prop Liouville properties}
\begin{enumerate}
\item The inverse operator is defined by $v(y)=\mathbf{L}^{-1}[u(x)]=\frac
{1}{\rho(y)}u(l(y))$.

\item The uniform norms of the operators $\mathbf{L}$ and $\mathbf{L}^{-1}$
are $\Vert\mathbf{L}\Vert=\max_{y\in\left[  A,B\right]  }\left\vert
\rho(y)\right\vert $ and $\Vert\mathbf{L}^{-1}\Vert=\max_{y\in\left[
A,B\right]  }\left\vert 1/\rho(y)\right\vert $.

\item The operator equality
\begin{equation*}
\mathbf{BL}=\mathbf{LC} 
\end{equation*}
is valid on $C^{2}\left(  A,B\right)  \cap C\left[  A,B\right]  $.
\end{enumerate}
\end{proposition}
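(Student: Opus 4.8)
Since $l^{\prime}(y)=\{r(y)/p(y)\}^{1/2}>0$ on $[A,B]$, the map $l$ is a strictly increasing homeomorphism of $[A,B]$ onto $[0,b]$, so $l^{-1}\in C[0,b]$; moreover $\rho(y)=(p(y)r(y))^{1/4}$ is continuous and strictly positive, hence bounded away from zero. Consequently the formula $v(y)=\rho(y)^{-1}u(l(y))$ defines a bounded linear operator $C[0,b]\to C[A,B]$, and substituting it into $u(x)=\rho(l^{-1}(x))v(l^{-1}(x))$ and conversely shows that it is a two-sided inverse of $\mathbf{L}$, which is statement~1. For statement~2 I would use the change of variable $x=l(y)$: for every $v\in C[A,B]$,
\[
\|\mathbf{L}[v]\|_{C[0,b]}=\sup_{y\in[A,B]}|\rho(y)v(y)|\le\Big(\max_{y\in[A,B]}|\rho(y)|\Big)\|v\|_{C[A,B]},
\]
with equality for $v\equiv 1$; hence $\|\mathbf{L}\|=\max_{[A,B]}|\rho|$, and the argument for $\mathbf{L}^{-1}$ is identical with $\rho$ replaced by $1/\rho$.

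\textbf{Statement 3.} Fix $v\in C^{2}(A,B)\cap C[A,B]$ and set $u=\mathbf{L}[v]$, i.e.\ $u(l(y))=\rho(y)v(y)$. The computation rests on two elementary facts: the relation between derivatives in the two variables, $\frac{d}{dx}=\{p(y)/r(y)\}^{1/2}\frac{d}{dy}=\frac{p(y)}{\rho(y)^{2}}\frac{d}{dy}$ (using $\rho^{2}=(pr)^{1/2}$), and the Darboux-type operator identity
\[
-\frac{d^{2}}{dx^{2}}+\frac{1}{\rho}\frac{d^{2}\rho}{dx^{2}}=\frac{1}{\rho}\Big(-\frac{d}{dx}\,\rho^{2}\,\frac{d}{dx}\Big)\frac{1}{\rho},
\]
which is checked by applying both sides to an arbitrary twice-differentiable function. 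The first sub-step is to verify that the two expressions for $Q$ in \textup{(\ref{Q})} coincide (a direct expansion of the bracket) and that $Q(x)=\frac{q(y)}{r(y)}+\frac{1}{\rho}\frac{d^{2}\rho}{dx^{2}}$; for the latter one writes $\frac{d\rho}{dx}=\frac{p}{\rho^{2}}\rho^{\prime}$, so $\frac{d^{2}\rho}{dx^{2}}=\frac{p}{\rho^{2}}\big(\frac{p\rho^{\prime}}{\rho^{2}}\big)^{\prime}$, and compares with $-\frac{\rho}{r}\big[p(1/\rho)^{\prime}\big]^{\prime}$ using $1/r=p/\rho^{4}$. Then, by the operator identity, $\mathbf{B}[u]=-u^{\prime\prime}+Qu=\frac{1}{\rho}\big(-\frac{d}{dx}(\rho^{2}\frac{d}{dx}(u/\rho))\big)+\frac{q}{r}u$, and since $u/\rho=v$ and $\rho^{2}\frac{dv}{dx}=p(y)v^{\prime}(y)$, converting the remaining $\frac{d}{dx}$ back to $\frac{d}{dy}$ gives $\mathbf{B}[u]=-\frac{p}{\rho^{3}}(pv^{\prime})^{\prime}+\frac{q}{r}\rho v=\frac{p}{\rho^{3}}\big(-(pv^{\prime})^{\prime}+qv\big)$. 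Because $\frac{\rho}{r}=\frac{p}{\rho^{3}}$, this equals $\rho\cdot\big(-\frac{1}{r}((pv^{\prime})^{\prime}-qv)\big)=\mathbf{L}[\mathbf{C}[v]]$, as asserted. (One can also substitute $u=\rho v$ into $-u^{\prime\prime}+Qu$ directly by the chain rule and simplify with \textup{(\ref{Q})}, but the factorization makes the cancellations transparent.)

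\textbf{Main obstacle.} There is no conceptual difficulty; the whole effort is algebraic — the verification of the Darboux identity and, above all, the matching of $\frac{1}{\rho}\frac{d^{2}\rho}{dx^{2}}$ with the $y$-derivative expression in \textup{(\ref{Q})}, where one must keep careful track of the two independent variables together with the powers of $p$, $r$ and $\rho$. I would also note that, since only $p^{\prime},r^{\prime}\in AC[A,B]$ is assumed, $Q$ and $u^{\prime\prime}$ exist merely almost everywhere, so the equality in statement~3 is to be read pointwise a.e.\ (everywhere when $p,r\in C^{2}$), whereas its right-hand side $\mathbf{L}[\mathbf{C}[v]]$ is continuous for every $v\in C^{2}(A,B)\cap C[A,B]$.
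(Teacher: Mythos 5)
Your proof is correct and complete. Note that the paper itself offers no proof of this proposition — it is stated as a summary of standard properties of the Liouville transformation with references to the literature — so there is nothing to compare against; your argument (the change of variables $x=l(y)$ for statements 1 and 2, and for statement 3 the factorization $-\tfrac{d^{2}}{dx^{2}}+\tfrac{\rho_{xx}}{\rho}=\tfrac{1}{\rho}\bigl(-\tfrac{d}{dx}\rho^{2}\tfrac{d}{dx}\bigr)\tfrac{1}{\rho}$ combined with the identities $\tfrac{d}{dx}=\tfrac{p}{\rho^{2}}\tfrac{d}{dy}$ and $\tfrac{p}{\rho^{3}}=\tfrac{\rho}{r}$ coming from $\rho^{4}=pr$) is exactly the standard verification, and all the algebra checks out, including the matching of $\tfrac{1}{\rho}\rho_{xx}$ with $-\tfrac{\rho}{r}\bigl[p(1/\rho)'\bigr]'$ in \eqref{Q}. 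Your closing remark on regularity is also apt: under the stated hypotheses $p',r'\in AC[A,B]$ the potential $Q$ is defined only a.e., so the identity $\mathbf{BL}=\mathbf{LC}$ holds pointwise a.e.\ on $(A,B)$.
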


\subsection{Transformation of formal powers}

Let $f$ be a non-vanishing solution of the equation
\begin{equation}
f^{\prime\prime}-Q(x)f=0,\quad x\in\left[  0,b\right]  ,\label{Schr hom}
\end{equation}
such that
\[
f(0)=1.
\]
On the existence of a non-vanishing $f$ see Remark
\ref{Rem Existence of non-vanishing} below. Note that in general complex
valued solutions are considered even when the coefficient $Q$ is real valued.
Denote $h:=f^{\prime}(0)\in\mathbb{C}$.

Consider two sequences of recursive integrals (see \cite{KrCV08}, \cite{KMoT},
\cite{KrPorter2010})
\begin{equation*}
X^{(0)}\equiv1,\qquad X^{(n)}(x)=n\int_{0}^{x}X^{(n-1)}(s)\left(
f^{2}(s)\right)  ^{(-1)^{n}}\,\mathrm{d}s,\qquad n=1,2,\ldots
\end{equation*}
and
\begin{equation*}
\widetilde{X}^{(0)}\equiv1,\qquad\widetilde{X}^{(n)}(x)=n\int_{0}%
^{x}\widetilde{X}^{(n-1)}(s)\left(  f^{2}(s)\right)  ^{(-1)^{n-1}}%
\,\mathrm{d}s,\qquad n=1,2,\ldots. 
\end{equation*}

\begin{definition}
\label{Def Formal powers phik and psik}The families of functions $\left\{
\varphi_{k}\right\}  _{k=0}^{\infty}$ and $\left\{  \psi_{k}\right\}
_{k=0}^{\infty}$ constructed according to the rules
\begin{equation*}
\varphi_{k}(x)=%
\begin{cases}
f(x)X^{(k)}(x), & k\text{\ odd},\\
f(x)\widetilde{X}^{(k)}(x), & k\text{\ even}%
\end{cases}
\qquad\text{and}\qquad\psi_{k}(x)=%
\begin{cases}
\dfrac{\widetilde{X}^{(k)}(x)}{f(x)}, & k\text{\ odd,}\\
\dfrac{X^{(k)}(x)}{f(x)}, & k\text{\ even}%
\end{cases}
\end{equation*}
are called systems of formal powers associated with equation \eqref{Schr}.
\end{definition}

\begin{remark}
The formal powers arise in the spectral parameter power series (SPPS)
representation for solutions of \eqref{Schr} (see \cite{KKRosu},
\cite{KrCV08}, \cite{KMoT}, \cite{KrPorter2010}).
\end{remark}

Analogously, let us introduce a system of formal powers corresponding to
equation (\ref{SL}).

Let $g$ be a solution of the equation
\begin{equation}
(p(y)g^{\prime})^{\prime}-q(y)g=0,\quad\text{ }y\in[A,B], \label{SLhom}%
\end{equation}
such that $g(y)\neq0$ for all $y\in\left[  A,B\right]  $ (see Remark
\ref{Rem Existence of non-vanishing}). Then the following two families of
auxiliary functions are well defined
\begin{align*}
\widetilde{Y}^{(0)}(y)  &  \equiv Y^{(0)}(y)\equiv1,\\
Y^{(n)}(y)  &  =%
\begin{cases}
n\int_{A}^{y}Y^{(n-1)}(s)\frac{1}{g^{2}(s)p(s)}ds, & n\text{ odd,}\\
n\int_{A}^{y}Y^{(n-1)}(s)g^{2}(s)r(s)ds, & n\text{ even,}%
\end{cases}
\\
\widetilde{Y}^{(n)}(y)  &  =%
\begin{cases}
n\int_{A}^{y}\widetilde{Y}^{(n-1)}(s)g^{2}(s)r(s)ds, & n\text{ odd,}\\
n\int_{A}^{y}\widetilde{Y}^{(n-1)}(s)\frac{1}{g^{2}(s)p(s)}ds, & n\text{
even.}%
\end{cases}
\end{align*}

Then similarly to Definition \ref{Def Formal powers phik and psik} we define
the formal powers associated to equation (\ref{SL}).

\begin{definition}
\label{Def PhikPsik} Let $p,q,r$ satisfy conditions of Subsection
\ref{Subsect Def Liouville transform} and $g$ be a non-vanishing solution of
\eqref{SLhom}. Then the formal powers associated to equation \eqref{SL} are
defined for any $k\in\mathbb{N}\cup\{0\}$ as follows
\[
\Phi_{k}(y)=%
\begin{cases}
g(y)Y^{(k)}(y), & k\text{ odd,}\\
g(y)\widetilde{Y}^{(k)}(y), & k\text{ even,}%
\end{cases}
\quad\Psi_{k}(y)=%
\begin{cases}
\frac{1}{g(y)}Y^{(k)}(y), & k\text{ even,}\\
\frac{1}{g(y)}\widetilde{Y}^{(k)}(y), & k\text{ odd.}%
\end{cases}
\]

\end{definition}

\begin{remark}
\label{Rem Existence of non-vanishing} The existence of a non-vanishing
solution of \eqref{SLhom} for complex valued $p$ and $q$ such that $p$,
$p^{\prime}$ and $q$ are continuous on $[A,B]$ was proved in \cite[Remark
5]{KrPorter2010}
(see also \cite{Camporesi et al 2011}). Moreover, the only
reason for the requirement of the absence of zeros of the functions $f$ and
$g$ is to make sure that the formal powers be well defined. As was shown in
\cite{KrTNewSPPS} this is true even when $f$ and/or $g$ have zeros, though in
that case corresponding formulas are slightly more complicated.
\end{remark}

\begin{theorem}
[\cite{KMT}]\label{relphikpsik} Let $p,q,r$ and $g$ be functions satisfying
the conditions of Definition \ref{Def PhikPsik}, and hence $f(x)=f(l(y)):=\rho
(y)g(y)$ is a particular solution of \eqref{Schr hom}. Then the following
relations are valid%
\begin{equation*}
\rho(y)\Phi_{n}(y)=\varphi_{n}(x)\quad\text{and}\quad\frac{1}{\rho(y)}\Psi
_{n}(y)=\psi_{n}(x)\text{ }\quad\text{for all }n\in\mathbb{N}\cup\left\{
0\right\}  , 
\end{equation*}
that is
\[
\varphi_{n}(x)=\mathbf{L}\left[  \Phi_{n}(y)\right]  \quad\text{and}\quad
\psi_{n}(x)=\mathbf{L}\left[  \Psi_{n}(y)/\rho^{2}(y)\right]  .
\]
\end{theorem}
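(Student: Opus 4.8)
The plan is to prove the claim by induction on $n$, exploiting the defining recursions of the auxiliary functions $Y^{(n)},\widetilde Y^{(n)}$ and $X^{(n)},\widetilde X^{(n)}$ together with the change of variables $x=l(y)$. First I would record the basic transformation data: since $l'(y)=(r(y)/p(y))^{1/2}$, we have $dx=(r/p)^{1/2}\,dy$, and since $f(x)=\rho(y)g(y)$ with $\rho=(pr)^{1/4}$, we get $f^2(s_x)=\rho^2 g^2=(pr)^{1/2}g^2$. Hence the two weights appearing in the recursions transform as
\[
f^{2}(s)\,\mathrm{d}s_x=(pr)^{1/2}g^{2}\cdot (r/p)^{1/2}\,\mathrm{d}s_y=g^{2}(s)r(s)\,\mathrm{d}s_y,
\qquad
\frac{\mathrm{d}s_x}{f^{2}(s)}=\frac{(r/p)^{1/2}}{(pr)^{1/2}g^{2}}\,\mathrm{d}s_y=\frac{\mathrm{d}s_y}{g^{2}(s)p(s)}.
\]
This is exactly the pairing between the $f$-weights in Definition \ref{Def Formal powers phik and psik} and the $(p,q,r)$-weights in Definition \ref{Def PhikPsik}, with the parity bookkeeping $(-1)^{n}$ versus odd/even matching up correctly.

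Next I would prove the core lemma $Y^{(n)}(y)=X^{(n)}(l(y))$ and $\widetilde Y^{(n)}(y)=\widetilde X^{(n)}(l(y))$ for all $n$, by simultaneous induction. The base case $n=0$ is trivial since all four functions are $\equiv 1$. For the inductive step, consider $n$ odd (the even case is symmetric with the roles of the two weights swapped): then
\[
X^{(n)}(l(y))=n\int_{0}^{l(y)}X^{(n-1)}(s)\bigl(f^{2}(s)\bigr)^{-1}\,\mathrm{d}s
=n\int_{A}^{y}X^{(n-1)}(l(t))\,\frac{\mathrm{d}t}{g^{2}(t)p(t)}
=n\int_{A}^{y}Y^{(n-1)}(t)\,\frac{\mathrm{d}t}{g^{2}(t)p(t)}=Y^{(n)}(y),
\]
using the substitution $s=l(t)$, the weight identity above, and the induction hypothesis $X^{(n-1)}\circ l=Y^{(n-1)}$ (valid since $n-1$ is even and the hypothesis covers both families). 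The analogous computation with $\widetilde X^{(n)}$ and the weight $g^2 r$ gives $\widetilde X^{(n)}\circ l=\widetilde Y^{(n)}$.

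Finally I would assemble the conclusion. Multiplying the core lemma by the appropriate power of $g$ or $1/g$ and comparing with $f=\rho g$: for $k$ odd, $\rho(y)\Phi_k(y)=\rho(y)g(y)Y^{(k)}(y)=f(l(y))X^{(k)}(l(y))=\varphi_k(l(y))$, and for $k$ even the same identity holds with $\widetilde Y^{(k)},\widetilde X^{(k)}$; this proves $\rho\Phi_n=\varphi_n\circ l$. Similarly, for $k$ even, $\frac{1}{\rho(y)}\Psi_k(y)=\frac{1}{\rho(y)g(y)}Y^{(k)}(y)=\frac{1}{f(l(y))}X^{(k)}(l(y))=\psi_k(l(y))$, and for $k$ odd one uses $\widetilde Y^{(k)},\widetilde X^{(k)}$; this proves $\frac1\rho\Psi_n=\psi_n\circ l$. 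The operator reformulation then follows by definition of $\mathbf L$: $\mathbf L[\Phi_n(y)]=\rho(l^{-1}(x))\Phi_n(l^{-1}(x))=\varphi_n(x)$, and since $\mathbf L[\Psi_n/\rho^2]=\rho\cdot(\Psi_n/\rho^2)\circ l^{-1}=(\Psi_n/\rho)\circ l^{-1}=\psi_n$. I expect no genuine obstacle here; the only place demanding care is the parity/weight bookkeeping — making sure that under $x=l(y)$ the weight $(f^2)^{(-1)^n}$ in the Schr\"odinger-side recursion lines up with the correct one of $\{g^{-2}p^{-1},\,g^2 r\}$ on the Sturm--Liouville side for each parity of $n$, and likewise for the tilde families — but this is precisely the weight identity displayed above, so it reduces to a routine check.
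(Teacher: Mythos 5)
Your proof is correct. The paper itself gives no proof of this theorem (it is quoted from the reference [KMT]), so there is nothing to compare against line by line; but your argument — the weight identities $f^{2}(l(t))\,l'(t)\,dt=g^{2}(t)r(t)\,dt$ and $l'(t)\,dt/f^{2}(l(t))=dt/(g^{2}(t)p(t))$, followed by a simultaneous induction giving $Y^{(n)}=X^{(n)}\circ l$ and $\widetilde{Y}^{(n)}=\widetilde{X}^{(n)}\circ l$, and then the assembly via $f=\rho g$ — is the natural and complete argument, and your parity bookkeeping between $(f^{2})^{(-1)^{n}}$ and the pair $\{1/(g^{2}p),\,g^{2}r\}$ checks out in all four cases.
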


In order that the equality $f(0)=1$ be fulfilled, $g$ must be chosen so that
\begin{equation}
g(A)=\frac{1}{\rho(A)}. \label{g(y0)}
\end{equation}
We will assume this initial condition to be satisfied.

It is easy to see that for $h=f^{\prime}(0)$ one obtains
\begin{equation}
h=\sqrt{\frac{p(A)}{r(A)}}\left(  \frac{g^{\prime}(A)}{g(A)}+\frac
{\rho^{\prime}(A)}{\rho(A)}\right)  . \label{h=}%
\end{equation}

\section{Solution of the one-dimensional Schr\"{o}dinger equation}\label{Section 3}

Let $\omega=\sqrt{\lambda}$, and $c(\omega,x)$, $s(\omega,x)$ denote the
solutions of (\ref{Schr}) satisfying the following initial conditions in the
origin%
\begin{equation}
c(\omega,0)=1,\quad c^{\prime}(\omega,0)=h,\quad s(\omega,0)=0,\quad
s^{\prime}(\omega,0)=\omega\label{init cs}
\end{equation}
where $h$ is an arbitrary complex number.

\begin{theorem}
[\cite{KNT}]\label{Th Representation of solutions via Bessel} The solutions
$c(\omega,x)$ and $s(\omega,x)$ of the equation
\[
u^{\prime\prime}-Q(x)u=-\omega^{2}u,\quad x\in\left(  0,b\right)
\]
admit the following representations%
\begin{equation}
c(\omega,x)=\cos\omega x+2\sum_{n=0}^{\infty}(-1)^{n}\beta_{2n}(x)j_{2n}%
(\omega x) \label{c}
\end{equation}
and%
\begin{equation}
s(\omega,x)=\sin\omega x+2\sum_{n=0}^{\infty}(-1)^{n}\beta_{2n+1}%
(x)j_{2n+1}(\omega x) \label{s}
\end{equation}
where $j_{k}$ stands for the spherical Bessel function of order $k$, the
functions $\beta_{n}$ are defined as follows
\begin{equation}
\beta_{n}(x)=\frac{2n+1}{2}\biggl(\sum_{k=0}^{n}\frac{l_{k,n}\varphi_{k}%
(x)}{x^{k}}-1\biggr), \label{beta direct definition}
\end{equation}
where $l_{k,n}$ is the coefficient of $x^{k}$ in the Legendre polynomial of
order $n$. The series in \eqref{c} and \eqref{s} converge uniformly with
respect to $x$ on $[0,b]$ and converge uniformly with respect to $\omega$ on
any compact subset of the complex plane of the variable $\omega$. Moreover,
for the functions
\begin{equation}
c_{N}(\omega,x)=\cos\omega x+2\sum_{n=0}^{\left[  N/2\right]  }(-1)^{n}%
\beta_{2n}(x)j_{2n}(\omega x) \label{cN}
\end{equation}
and
\begin{equation}
s_{N}(\omega,x)=\sin\omega x+2\sum_{n=0}^{\left[  \left(  N-1\right)
/2\right]  }(-1)^{n}\beta_{2n+1}(x)j_{2n+1}(\omega x) \label{sN}%
\end{equation}
the following uniform estimates hold
\begin{equation}
\left\vert c(\omega,x)-c_{N}(\omega,x)\right\vert \leq\sqrt{2x}\varepsilon
_{N}(x)\qquad\text{and}\qquad\left\vert s(\omega,x)-s_{N}(\omega,x)\right\vert
\leq\sqrt{2x}\varepsilon_{N}(x) \label{estc1}
\end{equation}
for any $\omega\in\mathbb{R}$, $\omega\neq0$, and
\begin{equation}
\left\vert c(\omega,x)-c_{N}(\omega,x)\right\vert \leq\varepsilon_{N}(x)
\sqrt{\frac{\sinh(2Cx)}{C}}\quad\text{and}\quad\left\vert s(\omega
,x)-s_{N}(\omega,x)\right\vert \leq\varepsilon_{N}(x) \sqrt{\frac{\sinh
(2Cx)}{C}} \label{estc2}%
\end{equation}
for any $\omega\in\mathbb{C}$, $\omega\neq0$ belonging to the strip
$\left\vert \operatorname{Im}\omega\right\vert \leq C$, $C\geq0$. Here
$\varepsilon_{N}$ is a function satisfying $\varepsilon_{N}\to0$ as
$N\to\infty$. These estimates are slight refinements for those presented in
\cite{KNT} using the ideas from \cite{KTC}. Some estimates for $\varepsilon
_{N}(x)$ are presented in Appendix \ref{Appendix Errors}.
\end{theorem}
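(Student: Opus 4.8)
The plan is to derive the representations \eqref{c}--\eqref{s} from the transmutation (transformation) operator associated with the Schr\"{o}dinger equation \eqref{Schr}. Under the present hypotheses the potential $Q$ admits a Volterra-type transmutation operator $\mathbf{T}u(x)=u(x)+\int_{-x}^{x}K(x,t)\,u(t)\,dt$ with continuous kernel $K$ solving the corresponding Goursat problem, satisfying $\bigl(-\tfrac{d^{2}}{dx^{2}}+Q\bigr)\mathbf{T}=\mathbf{T}\bigl(-\tfrac{d^{2}}{dx^{2}}\bigr)$ and $\mathbf{T}[1]=f$. Since $\mathbf{T}[\cos\omega x]$ solves \eqref{Schr} with $\lambda=\omega^{2}$ and, by a direct computation using the Goursat conditions on $K$, fulfils the initial conditions \eqref{init cs} of $c$, one has $c(\omega,x)=\mathbf{T}[\cos\omega x]$, and likewise $s(\omega,x)=\mathbf{T}[\sin\omega x]$. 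The first step is then to expand, for each fixed $x\in(0,b]$, the function $t\mapsto K(x,t)$ on $[-x,x]$ in a Fourier--Legendre series after rescaling $t=x\tau$, namely $K(x,t)=\frac{1}{x}\sum_{n=0}^{\infty}\beta_{n}(x)P_{n}(t/x)$, convergent in $L^{2}(-x,x)$, with $\beta_{n}(x)=\frac{2n+1}{2}\int_{-x}^{x}K(x,t)P_{n}(t/x)\,dt$.

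The second step is to insert this expansion into $c(\omega,x)=\cos\omega x+\int_{-x}^{x}K(x,t)\cos\omega t\,dt$ and to use Rayleigh's formula $\int_{-1}^{1}P_{n}(\tau)e^{iz\tau}\,d\tau=2i^{n}j_{n}(z)$: its real part gives $\int_{-1}^{1}P_{n}(\tau)\cos(z\tau)\,d\tau=2(-1)^{m}j_{2m}(z)$ for $n=2m$ and $0$ for odd $n$, while its imaginary part gives $\int_{-1}^{1}P_{n}(\tau)\sin(z\tau)\,d\tau=2(-1)^{m}j_{2m+1}(z)$ for $n=2m+1$ and $0$ for even $n$. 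This produces exactly \eqref{c} and \eqref{s}. It remains to identify $\beta_{n}$ with \eqref{beta direct definition}: for this I would use the mapping property $\mathbf{T}[x^{n}]=\varphi_{n}(x)$, $n\ge0$ (equivalently, match the Taylor coefficients in $\omega$ of $c=\mathbf{T}[\cos\omega x]$ and $s=\mathbf{T}[\sin\omega x]$ with the SPPS representations of $c$ and $s$), which yields the moment identities $\int_{-x}^{x}K(x,t)t^{k}\,dt=\varphi_{k}(x)-x^{k}$. Expanding $P_{n}(t/x)=\sum_{k=0}^{n}l_{k,n}(t/x)^{k}$ in the formula for $\beta_{n}(x)$ and substituting these moments gives $\beta_{n}(x)=\frac{2n+1}{2}\bigl(\sum_{k=0}^{n}l_{k,n}\varphi_{k}(x)/x^{k}-\sum_{k=0}^{n}l_{k,n}\bigr)$, and since $\sum_{k=0}^{n}l_{k,n}=P_{n}(1)=1$ this is \eqref{beta direct definition}; note $l_{k,n}=0$ unless $k\equiv n\pmod 2$, so only the $\varphi_{k}$ of the appropriate parity enter, consistently with Definition \ref{Def Formal powers phik and psik}.

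The third step, convergence and the error bounds, rests on a Parseval-type identity for spherical Bessel functions. From the plane-wave expansion $e^{iz\tau}=\sum_{n\ge0}i^{n}(2n+1)j_{n}(z)P_{n}(\tau)$ and $\|P_{n}\|_{L^{2}(-1,1)}^{2}=2/(2n+1)$ one gets $2\sum_{n\ge0}(2n+1)|j_{n}(z)|^{2}=\int_{-1}^{1}|e^{iz\tau}|^{2}\,d\tau$, which equals $2$ for real $z$ and equals $\sinh(2|\operatorname{Im}z|)/|\operatorname{Im}z|\le\sinh(2Cx)/(Cx)$ when $z=\omega x$ with $|\operatorname{Im}\omega|\le C$ (using monotonicity of $t\mapsto\sinh(t)/t$). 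Applying the Cauchy--Schwarz inequality to the tail $c(\omega,x)-c_{N}(\omega,x)=2\sum_{n>[N/2]}(-1)^{n}\beta_{2n}(x)j_{2n}(\omega x)$, after splitting $\beta_{2n}=\bigl(\beta_{2n}/\sqrt{4n+1}\bigr)\cdot\sqrt{4n+1}$, bounds the error by $2\bigl(\sum_{n>[N/2]}\beta_{2n}(x)^{2}/(4n+1)\bigr)^{1/2}\cdot\bigl(\sum_{n>[N/2]}(4n+1)|j_{2n}(\omega x)|^{2}\bigr)^{1/2}$, where the second factor is $\le1$ for real $\omega$ and $\le\bigl(\sinh(2Cx)/(2Cx)\bigr)^{1/2}$ on the strip, while the first factor is the tail of the convergent series $\sum_{n\ge0}\beta_{n}(x)^{2}/(2n+1)=\tfrac{x}{2}\|K(x,\cdot)\|_{L^{2}(-x,x)}^{2}$. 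Carrying out the same estimate for \eqref{s} and setting $\varepsilon_{N}(x):=\sqrt{2/x}\,\bigl(\sum_{n>[N/2]}\beta_{2n}(x)^{2}/(4n+1)+\sum_{n>[(N-1)/2]}\beta_{2n+1}(x)^{2}/(4n+3)\bigr)^{1/2}$, which tends to $0$ as $N\to\infty$ because $K(x,\cdot)\in L^{2}(-x,x)$, one obtains both \eqref{estc1} and \eqref{estc2} with the same $\varepsilon_{N}$; the uniform convergence in $x\in[0,b]$ and the convergence uniform on compacts in $\omega$ follow from the same inequalities together with the continuity of $K$ and hence of the $\beta_{n}$.

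The main obstacle, and the point requiring the most care, is the identification of the Fourier--Legendre coefficients of the transmutation kernel with the explicit expression \eqref{beta direct definition}: this rests on the mapping property $\mathbf{T}[x^{n}]=\varphi_{n}$, i.e.\ on knowing that the Taylor coefficients in $\omega$ of $c$ and $s$ are precisely the formal powers built from the recursive integrals $X^{(n)},\widetilde{X}^{(n)}$. The other delicate point is the refinement of the constants in \eqref{estc1}--\eqref{estc2} to $\sqrt{2x}$ and $\sqrt{\sinh(2Cx)/C}$ (the improvement over \cite{KNT} borrowed from \cite{KTC}): bounding $\sum_{n>M}(2n+1)|j_{n}(\omega x)|^{2}$ through the individual asymptotics of $j_{n}$ would destroy the $\omega$-independence, so one must use the exact Parseval identity above. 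The remaining computations, namely the legitimacy of the term-by-term integration (justified by uniform convergence in the relevant variable) and the verification at $\omega=0$, are routine.
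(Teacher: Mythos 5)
Your proposal is correct and follows essentially the same route as the paper (and the cited source \cite{KNT}): transmutation operator, Fourier--Legendre expansion of the kernel $K(x,\cdot)$, identification of the coefficients via the mapping property $\mathbf{T}[x^{k}]=\varphi_{k}$ together with $\sum_{k}l_{k,n}=P_{n}(1)=1$, and a Cauchy--Schwarz tail estimate. The only (immaterial) difference is that you apply Cauchy--Schwarz in coefficient space via the Parseval identity $2\sum_{n}(2n+1)|j_{n}(z)|^{2}=\int_{-1}^{1}|e^{iz\tau}|^{2}d\tau$, whereas the appendix of the paper applies it directly to the integral $\int_{-x}^{x}(K-K_{N})\cos\omega t\,dt$ and bounds $\int_{-x}^{x}|\cos\omega t|^{2}dt$ by $2x$ (resp.\ $\sinh(2Cx)/C$); the two computations are equivalent by the Legendre-system isometry and yield the same constants.
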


Inequalities (\ref{estc1}) and (\ref{estc2}) reveal an interesting feature of
the representations (\ref{c}) and (\ref{s}): the accuracy of approximation of
the exact solutions by the partial sums (\ref{cN}) and (\ref{sN}) does not
depend on $\omega$ meanwhile it belongs to a strip $\left\vert
\operatorname{Im}\omega\right\vert \leq C$ in the complex plane. This feature
was tested and confirmed numerically in \cite{KNT}. An analogous result is
valid for the derivatives of the solutions. We formulate it in the following statement.

\begin{theorem}
[\cite{KNT}]\label{Th Derivatives Schrod} The derivatives of the solutions
$c(\omega,x)$ and $s(\omega,x)$ with respect to $x$ admit the following
representations
\begin{equation}
c^{\prime}(\omega,x)=-\omega\sin\omega x+\left(  h+\frac{1}{2}\int_{0}%
^{x}Q(s)\,ds\right)  \cos\omega x+2\sum_{n=0}^{\infty}(-1)^{n}\gamma
_{2n}(x)j_{2n}(\omega x) \label{c prime (omega)}
\end{equation}
and
\begin{equation}
s^{\prime}(\omega,x)=\omega\cos\omega x+\frac{1}{2}\left(  \int_{0}%
^{x}Q(s)\,ds\right)  \sin\omega x+2\sum_{n=0}^{\infty}(-1)^{n}\gamma
_{2n+1}(x)j_{2n+1}(\omega x) \label{s prime (omega)}%
\end{equation}
where $\gamma_{n}$ are defined as follows
\begin{equation}
\gamma_{n}(x)=\frac{2n+1}{2}\left(  \sum_{k=0}^{n}\frac{l_{k,n}\left(
k\psi_{k-1}(x)+\frac{f^{\prime}(x)}{f(x)}\varphi_{k}(x)\right)  }{x^{k}}%
-\frac{n(n+1)}{2x}-\frac{1}{2}\int_{0}^{x}Q(s)\,ds-\frac{h}{2}\left(
1+(-1)^{n}\right)  \right)  . \label{gamma n}
\end{equation}
The series in \eqref{c prime (omega)} and \eqref{s prime (omega)} converge
uniformly with respect to $x$ on $[0,b]$ and converge uniformly with respect
to $\omega$ on any compact subset of the complex plane of the variable
$\omega$.

Moreover, for the approximations
\begin{equation*}
\overset{\circ}{c}_{N}(\omega,x):=-\omega\sin\omega x+\left(  h+\frac{1}%
{2}\int_{0}^{x}Q(s)\,ds\right)  \cos\omega x+2\sum_{n=0}^{\left[  N/2\right]
}(-1)^{n}\gamma_{2n}(x)j_{2n}(\omega x) 
\end{equation*}
and
\begin{equation*}
\overset{\circ}{s}_{N}(\omega,x):=\omega\cos\omega x+\frac{1}{2}\left(
\int_{0}^{x}Q(s)\,ds\right)  \sin\omega x+2\sum_{n=0}^{\left[  \left(
N-1\right)  /2\right]  }(-1)^{n}\gamma_{2n+1}(x)j_{2n+1}(\omega x)
\end{equation*}
the following inequalities are valid
\[
\left\vert c^{\prime}(\omega,x)-\overset{\circ}{c}_{N}(\omega,x)\right\vert
\leq\sqrt{2x}\varepsilon_{1,N}(x)\qquad\text{and}\qquad\left\vert s^{\prime
}(\omega,x)-\overset{\circ}{s}_{N}(\omega,x)\right\vert \leq\sqrt
{2x}\varepsilon_{1,N}(x)
\]
for any $\omega\in\mathbb{R}$, $\omega\neq0$, and
\[
\left\vert c^{\prime}(\omega,x)-\overset{\circ}{c}_{N}(\omega,x)\right\vert
\leq\varepsilon_{1,N}(x)\sqrt{\frac{\sinh(2Cx)}{C}} \quad\text{and}%
\quad\left\vert s^{\prime}(\omega,x)-\overset{\circ}{s}_{N}(\omega
,x)\right\vert \leq\varepsilon_{1,N}(x)\sqrt{\frac{\sinh(2Cx)}{C}}
\]
for any $\omega\in\mathbb{C}$, $\omega\neq0$ belonging to the strip
$\left\vert \operatorname{Im}\omega\right\vert \leq C$, $C\geq0$.
\end{theorem}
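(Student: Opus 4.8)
The plan is to differentiate the integral (transmutation) representations underlying Theorem~\ref{Th Representation of solutions via Bessel}. Recall that the series \eqref{c}, \eqref{s} arise from writing
\[
c(\omega,x)=\cos\omega x+\int_{-x}^{x}\mathbf{K}(x,t)\cos\omega t\,dt,\qquad s(\omega,x)=\sin\omega x+\int_{-x}^{x}\mathbf{K}(x,t)\sin\omega t\,dt
\]
with one and the same transmutation kernel $\mathbf{K}$, and from expanding $\mathbf{K}(x,x\sigma)=\sum_{n\ge0}\frac{\beta_{n}(x)}{x}P_{n}(\sigma)$ in Legendre polynomials, using $\int_{-1}^{1}P_{n}(\sigma)\cos(z\sigma)\,d\sigma=2(-1)^{n/2}j_{n}(z)$ for even $n$ and the analogous identity with $\sin$ for odd $n$. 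First I would differentiate in $x$ by the Leibniz rule, obtaining
\[
c'(\omega,x)=-\omega\sin\omega x+\bigl(\mathbf{K}(x,x)+\mathbf{K}(x,-x)\bigr)\cos\omega x+\int_{-x}^{x}\partial_{x}\mathbf{K}(x,t)\cos\omega t\,dt
\]
and similarly for $s'$ with the factor $\mathbf{K}(x,x)-\mathbf{K}(x,-x)$ in front of $\sin\omega x$. The Goursat data for $\mathbf{K}$, which follow from the equation $\mathbf{K}_{xx}-\mathbf{K}_{tt}=Q(x)\mathbf{K}$ and the conditions $\mathbf{K}(0,0)=\mathbf{K}(x,-x)=h/2$, are $\mathbf{K}(x,x)=\frac{h}{2}+\frac12\int_{0}^{x}Q(s)\,ds$; hence the boundary terms reproduce exactly the non-series parts of \eqref{c prime (omega)} and \eqref{s prime (omega)}, and in particular the $h$-term is absent for $s'$ because the two boundary contributions cancel in the difference.

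It then remains to identify the series, that is, to compute the even-index Legendre coefficients of $\partial_{x}\mathbf{K}(x,\cdot)$ on $[-x,x]$ (for $c'$) and the odd-index ones (for $s'$). Differentiating the Fourier--Legendre expansion of $\mathbf{K}$ with the moving endpoint gives, after the substitution $\sigma=t/x$,
\[
\partial_{x}\mathbf{K}(x,x\sigma)=\frac1x\sum_{n\ge0}\Bigl[\beta_{n}'(x)\,P_{n}(\sigma)-\frac{\beta_{n}(x)}{x}\bigl(P_{n}(\sigma)+\sigma P_{n}'(\sigma)\bigr)\Bigr];
\]
re-expanding $\sigma P_{n}'(\sigma)$ in the Legendre basis, collecting the coefficient of $P_{2n}(\sigma)$ and applying the same Bessel integral identity shows that the series in \eqref{c prime (omega)} equals $2\sum_{n}(-1)^{n}\gamma_{2n}(x)j_{2n}(\omega x)$ with $\gamma_{n}$ a combination of $\beta_{n}'$ and of (tails of) the $\beta_{k}$. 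To reach the closed form \eqref{gamma n} I would substitute \eqref{beta direct definition}, use $\frac{d}{dx}\bigl(\varphi_{k}(x)/x^{k}\bigr)=\varphi_{k}'(x)/x^{k}-k\varphi_{k}(x)/x^{k+1}$ together with the identity
\[
\varphi_{k}'(x)=\frac{f'(x)}{f(x)}\varphi_{k}(x)+k\psi_{k-1}(x),\qquad k\ge1,
\]
which is immediate from the definitions of $X^{(k)}$, $\widetilde X^{(k)}$ by checking the two parity cases (e.g.\ $\bigl(X^{(k)}\bigr)'=kX^{(k-1)}f^{-2}$ for odd $k$), and finally use that $\varphi_{k}(x)/x^{k}\to1$ as $x\to0^{+}$ (since $f(0)=1$ and $X^{(k)}(x),\widetilde X^{(k)}(x)\sim x^{k}$) and $\sum_{k}k\,l_{k,n}=P_{n}'(1)=\frac{n(n+1)}{2}$ to see that the a priori singular terms $\sim k\,l_{k,n}\varphi_{k}(x)/x^{k+1}$ combine with $-\frac{n(n+1)}{2x}$ into a function regular at the origin; the remaining terms $-\frac12\int_{0}^{x}Q$ and $-\frac{h}{2}(1+(-1)^{n})$ trace back to the boundary data $\mathbf{K}(x,\pm x)$. (An alternative way to obtain \eqref{gamma n} is to differentiate the SPPS representation of $c$, resp.\ $s$, term by term in $x$ and re-derive the NSBF coefficients from the differentiated SPPS exactly as \eqref{beta direct definition} was obtained, again using the displayed identity for $\varphi_{k}'$.)

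Finally, the convergence statements and the error estimates follow the scheme of Theorem~\ref{Th Representation of solutions via Bessel} and Appendix~\ref{Appendix Errors}. Uniform convergence in $x\in[0,b]$ and on compact subsets of the $\omega$-plane follows once the coefficients $\gamma_{n}(x)$ are shown to decay in $n$ at a rate controlled by the smoothness of $Q$, one order slower than the decay of the $\beta_{n}$ since $\partial_{x}\mathbf{K}$ is one degree less regular than $\mathbf{K}$; this is exactly why the estimates are stated with a new quantity $\varepsilon_{1,N}$ rather than $\varepsilon_{N}$. The $\omega$-independent bound on the real axis and the $\sqrt{\sinh(2Cx)/C}$-bound on the strip $|\operatorname{Im}\omega|\le C$ are then obtained by applying the Cauchy--Schwarz inequality to the tail $2\sum_{n>N}(-1)^{n}\gamma_{2n}(x)j_{2n}(\omega x)$ together with the weighted Bessel-sum estimates already used for Theorem~\ref{Th Representation of solutions via Bessel}. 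The step I expect to be the main obstacle is the bookkeeping in the middle part: tracking how the moving-endpoint Leibniz terms and the Goursat boundary data reorganize into precisely the coefficients $\gamma_{n}$ of \eqref{gamma n}, in particular reconciling the tail sums $\sum_{i\ge1}\beta_{n+2i}(x)$ produced by the $\sigma P_{n}'(\sigma)$ re-expansion with the finite closed form and verifying the cancellation of the $1/x$-singularities at $x=0$. A naive term-by-term differentiation of \eqref{c} and \eqref{s} is less convenient here, as it produces $\omega j_{n\pm1}(\omega x)$ cross-terms that must then be resummed.
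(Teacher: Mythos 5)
Your proposal matches the paper's approach: the theorem is quoted from \cite{KNT}, and the supporting machinery in Appendix \ref{Appendix Errors} --- the Goursat data \eqref{K GoursatCond}--\eqref{K1 GoursatCond}, the Fourier--Legendre expansion \eqref{K and K1 Legendre} of $K_{1}=\partial_{x}K$ with coefficients $\gamma_{n}$, and the Cauchy--Schwarz bound \eqref{c minus cn} (applied with $\int_{-x}^{x}|\cos\omega t|^{2}dt\le 2x$ on the real line and $\le\sinh(2Cx)/C$ on the strip) --- is exactly the Leibniz-plus-Legendre-expansion argument you describe. The only place you make life harder than necessary is the closed form \eqref{gamma n}: instead of re-expanding $\sigma P_{n}'(\sigma)$ and taming the tail sums $\sum_{j\ge1}\beta_{n+2j}$, compute the moments directly via $\int_{-x}^{x}K_{1}(x,t)\,t^{k}\,dt=\frac{d}{dx}\bigl(\varphi_{k}(x)-x^{k}\bigr)-x^{k}K(x,x)-(-x)^{k}K(x,-x)$, then use your identity $\varphi_{k}'=k\psi_{k-1}+\frac{f'}{f}\varphi_{k}$ together with $\sum_{k}k\,l_{k,n}=P_{n}'(1)=\frac{n(n+1)}{2}$ and $\sum_{k}(\pm1)^{k}l_{k,n}=P_{n}(\pm1)$, which yields \eqref{gamma n} in one line.
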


\begin{remark}
[\cite{KNT}]\label{Rem eqs beta}The functions $\beta_{n}$ also can be
constructed as solutions of the recurrent equations%
\begin{equation}
\frac{1}{x^{n}}\mathbf{B}\left[  x^{n}\beta_{n}(x)\right]  =\frac{2n+1}%
{2n-3}x^{n-1}\mathbf{B}\left[  \frac{\beta_{n-2}(x)}{x^{n-1}}\right]  ,\qquad
n\ge1, \label{receqsbeta}
\end{equation}
with the first functions given by $\beta_{-1}:=1/2$ and $\beta_{0}=(f-1)/2$
and initial conditions $\sigma_{n}(0)=\sigma_{n}^{\prime}(0)=0$ where
$\sigma_{n}(x):=x^{n}\beta_{n}(x)$.

The functions $\gamma_{n}$ can be calculated from the equalities
\begin{align}
\gamma_{-1}(x)  &  = \frac14 \int_{0}^{x} Q(s)\,ds,\nonumber\\
\gamma_{0}(x)  &  =\beta_{0}^{\prime}(x)-\frac{h}{2}-\frac{1}{4}\int_{0}%
^{x}Q(s)\,ds = \frac{f^{\prime}(x)}2-\frac{h}{2}-\frac{1}{4}\int_{0}%
^{x}Q(s)\,ds ,\nonumber\\
\gamma_{n}(x)  &  =\frac{n}{x}\beta_{n}(x)+\beta_{n}^{\prime}(x)+\frac
{2n+1}{2n-3}\left(  \gamma_{n-2}(x)-\beta_{n-2}^{\prime}(x)+\frac{n-1}{x}%
\beta_{n-2}(x)\right)  ,\quad n\ge1. \label{sequence of equations for gamma}
\end{align}
One can use $\gamma_{1}(x) =\frac{1}{x}\beta_{1}(x)+\beta_{1}^{\prime
}(x)-\frac{3}{4}\int_{0}^{x}Q(s)\,ds$ as well.
\end{remark}

\begin{remark}
\label{Rem Integration procedure}Based on the formulas from Remark
\ref{Rem eqs beta} a stable recurrent integration procedure for computing the
functions $\sigma_{n}$ and $\tau_{n}:=x^{n}\gamma_{n}$ was proposed in
\cite{KNT} in the following form.
\begin{align}
\eta_{n}(x)  &  =\int_{0}^{x}\bigl(tf^{\prime}(t)+(n-1)f(t)\bigr)\sigma
_{n-2}(t)\,dt,\quad\theta_{n}(x)=\int_{0}^{x}\frac{1}{f^{2}(t)}%
\bigl(\eta_{n}(t)-tf(t)\sigma_{n-2}(t)\bigr)dt,\label{eta n AND theta n}\\
\sigma_{n}(x)  &  =\frac{2n+1}{2n-3}\left[  x^{2}\sigma_{n-2}(x)+c_{n}%
f(x)\theta_{n}(x)\right]  ,\label{sigma n}\\
\tau_{n}(x)  &  =\frac{2n+1}{2n-3}\left[  x^{2}\tau_{n-2}(x)+c_{n}\left(
f^{\prime}(x)\theta_{n}(x)+\frac{\eta_{n}(x)}{f(x)}\right)  -(c_{n}%
-2n+1)x\sigma_{n-2}(x)\right]  ,\quad n\ge1, \label{tau n}%
\end{align}
where $c_{n}=1$ if $n=1$ and $c_{n}=2(2n-1)$ otherwise.
\end{remark}

\section{Solution of the Sturm-Liouville equation}\label{Section 4}

Let us generalize Theorems \ref{Th Representation of solutions via Bessel} and
\ref{Th Derivatives Schrod} onto solutions of equation (\ref{SL}).

\begin{theorem}
Let the functions $p$, $q$ and $r$ satisfy the conditions from Subsection
\ref{Subsect Def Liouville transform} and $g$ be a solution of \eqref{SLhom}
satisfying \eqref{g(y0)} and such that $g(y)\neq0$ for all $y\in\left[
A,B\right]  $. Then two linearly independent solutions $v_{1}$ and $v_{2}$ of
equation \eqref{SL} for $\omega\neq0$ can be written in the form
\begin{equation}
v_{1}(\omega,y)=\frac{\cos\left(  \omega l(y)\right)  }{\rho(y)}+2\sum
_{n=0}^{\infty}(-1)^{n}\alpha_{2n}(y)j_{2n}(\omega l(y)) \label{v1}
\end{equation}
and
\begin{equation}
v_{2}(\omega,y)=\frac{\sin\left(  \omega l(y)\right)  }{\rho(y)}+2\sum
_{n=0}^{\infty}(-1)^{n}\alpha_{2n+1}(y)j_{2n+1}(\omega l(y)) \label{v2}%
\end{equation}
with $l(y)$ defined by \eqref{l(y)}, the coefficients $\alpha_{n}$ being
defined by the equalities
\begin{equation}
\alpha_{n}(y)=\frac{2n+1}{2}\left(  \sum_{k=0}^{n}\frac{l_{k,n}\Phi_{k}%
(y)}{l^{k}(y)}-\frac{1}{\rho(y)}\right)  , \label{alpha_n}
\end{equation}
where $\Phi_{k}$ are from Definition \ref{Def PhikPsik}. The solutions $v_{1}$
and $v_{2}$ satisfy the following initial conditions
\begin{align}
v_{1}(\omega,A) & =\frac{1}{\rho(A)},\qquad v_{2}(\omega,A)=0,\label{init v1}
\\
v_{1}^{\prime}(\omega,A) & =g^{\prime}(A),\qquad v_{2}^{\prime}(\omega
,A)=\frac{\omega}{\rho(A)}\sqrt{\frac{r(A)}{p(A)}}. \label{init v2}
\end{align}
The series in \eqref{v1} and \eqref{v2} converge uniformly with respect to $y$
on $[A,B]$ and converge uniformly with respect to $\omega$ on any compact
subset of the complex plane of the variable $\omega$.

Moreover, for the functions
\begin{equation}
v_{1}^{N}(\omega,y)=\frac{\cos\left(  \omega l(y)\right)  }{\rho(y)}%
+2\sum_{n=0}^{\left[  N/2\right]  }(-1)^{n}\alpha_{2n}(y)j_{2n}(\omega l(y))
\label{v1N}
\end{equation}
and
\begin{equation}
v_{2}^{N}(\omega,y)=\frac{\sin\left(  \omega l(y)\right)  }{\rho(y)}%
+2\sum_{n=0}^{\left[  \left(  N-1\right)  /2\right]  }(-1)^{n}\alpha
_{2n+1}(y)j_{2n+1}(\omega l(y)) \label{v2N}%
\end{equation}
the following estimates hold%
\begin{equation}
\left\vert v_{1,2}(\omega,y)-v_{1,2}^{N}(\omega,y)\right\vert \leq\sqrt
{2l(y)}\varepsilon_{N}(l(y))\max_{y\in\left[  A,B\right]  }\frac{1}{\left\vert
\rho(y)\right\vert } \label{v12 estimate}
\end{equation}
for any $\omega\in\mathbb{R}$, $\omega\neq0$, and
\begin{equation}
\left\vert v_{1,2}(\omega,y)-v_{1,2}^{N}(\omega,y)\right\vert \leq
\varepsilon_{N}(l(y))\sqrt{\frac{\sinh(2Cl(y))}{C}}\max_{y\in\left[
A,B\right]  }\frac{1}{\left\vert \rho(y)\right\vert } \label{v12 estimate C}%
\end{equation}
for any $\omega\in\mathbb{C}$, $\omega\neq0$ belonging to the strip
$\left\vert \operatorname{Im}\omega\right\vert \leq C$, $C\geq0$, where
$\varepsilon_{N}$ is a function from Theorem
\ref{Th Representation of solutions via Bessel}.
\end{theorem}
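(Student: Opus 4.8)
The plan is to deduce everything from the one-dimensional Schr\"odinger case by transporting it through the Liouville transformation. Since $g$ satisfies \eqref{g(y0)}, the function $f$ defined by $f(l(y)):=\rho(y)g(y)$ is, by Theorem \ref{relphikpsik}, a solution of \eqref{Schr hom} with $f(0)=\rho(A)g(A)=1$, so Theorem \ref{Th Representation of solutions via Bessel} applies with the formal powers $\varphi_k$ built from this $f$ and with $h=f'(0)$ given by \eqref{h=}. Set $u_1(\omega,x):=c(\omega,x)$, $u_2(\omega,x):=s(\omega,x)$, and define $v_j:=\mathbf{L}^{-1}[u_j]$, i.e. $v_j(\omega,y)=u_j(\omega,l(y))/\rho(y)$. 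By the operator identity $\mathbf{BL}=\mathbf{LC}$ of Proposition \ref{Prop Liouville properties}, the functions $u_j$ solve $\mathbf{B}u=\omega^2 u$ if and only if the $v_j$ solve $\mathbf{C}v=\omega^2 v$, which is \eqref{SL} with $\lambda=\omega^2$; hence $v_1,v_2$ are solutions of \eqref{SL}.

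Next I would substitute the NSBF series \eqref{c}, \eqref{s} into $v_j(\omega,y)=u_j(\omega,l(y))/\rho(y)$ and collect the terms, obtaining
\[
v_1(\omega,y)=\frac{\cos(\omega l(y))}{\rho(y)}+2\sum_{n=0}^{\infty}(-1)^n\frac{\beta_{2n}(l(y))}{\rho(y)}j_{2n}(\omega l(y)),
\]
and the analogous expression for $v_2$. It then suffices to verify the identity $\beta_n(l(y))/\rho(y)=\alpha_n(y)$ with $\alpha_n$ from \eqref{alpha_n}: dividing \eqref{beta direct definition}, evaluated at $x=l(y)$, by $\rho(y)$ and using $\varphi_k(l(y))=\rho(y)\Phi_k(y)$ from Theorem \ref{relphikpsik} turns $\sum_k l_{k,n}\varphi_k(l(y))/(\rho(y)l^k(y))-1/\rho(y)$ precisely into $\sum_k l_{k,n}\Phi_k(y)/l^k(y)-1/\rho(y)$. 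This gives \eqref{v1}, \eqref{v2}. Uniform convergence in $y$ on $[A,B]$ and in $\omega$ on compact subsets of the $\omega$-plane is inherited from Theorem \ref{Th Representation of solutions via Bessel}, because $l$ maps $[A,B]$ continuously and bijectively onto $[0,b]$ and $1/\rho$ is continuous and bounded on $[A,B]$.

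The initial conditions require the only computation that demands some care. From $l(A)=0$ and \eqref{init cs} one gets $v_1(\omega,A)=1/\rho(A)$ and $v_2(\omega,A)=0$. Differentiating $v_j(\omega,y)=u_j(\omega,l(y))/\rho(y)$ and using $l'(A)=\sqrt{r(A)/p(A)}$ together with \eqref{init cs} gives
\[
v_1'(\omega,A)=-\frac{\rho'(A)}{\rho^2(A)}+\frac{h}{\rho(A)}\sqrt{\frac{r(A)}{p(A)}},\qquad v_2'(\omega,A)=\frac{\omega}{\rho(A)}\sqrt{\frac{r(A)}{p(A)}}.
\]
Substituting \eqref{h=} for $h$ and using $g(A)=1/\rho(A)$, the terms containing $\rho'(A)$ cancel and the first equality collapses to $v_1'(\omega,A)=g'(A)$, which proves \eqref{init v1}, \eqref{init v2}. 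Linear independence for $\omega\neq0$ then follows at once, since the constant Wronskian $p\,(v_1v_2'-v_1'v_2)$ equals, at $y=A$, the nonzero value $p(A)\rho^{-2}(A)\,\omega\sqrt{r(A)/p(A)}$.

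Finally, for the error estimates I would observe that, with $\alpha_n=\beta_n(l(\cdot))/\rho$, the partial sums \eqref{v1N}, \eqref{v2N} satisfy $v_1^N(\omega,y)=c_N(\omega,l(y))/\rho(y)$ and $v_2^N(\omega,y)=s_N(\omega,l(y))/\rho(y)$, with $c_N,s_N$ as in \eqref{cN}, \eqref{sN}. Hence $|v_{1,2}(\omega,y)-v_{1,2}^N(\omega,y)|=|\rho(y)|^{-1}|c(\omega,l(y))-c_N(\omega,l(y))|$, respectively with $s,s_N$, and \eqref{v12 estimate}, \eqref{v12 estimate C} follow from \eqref{estc1}, \eqref{estc2} upon bounding $|\rho(y)|^{-1}$ by $\max_{y\in[A,B]}|\rho(y)|^{-1}$. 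The whole argument is thus a transcription of Section \ref{Section 3} through $\mathbf{L}^{-1}$, the initial-value bookkeeping above being the only nonroutine point.
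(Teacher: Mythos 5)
Your proposal is correct and follows essentially the same route as the paper: transport the NSBF representations of $c(\omega,x)$ and $s(\omega,x)$ through $\mathbf{L}^{-1}$, identify $\alpha_n=\mathbf{L}^{-1}[\beta_n]$ via Theorem \ref{relphikpsik} and the scaling property of $\mathbf{L}^{-1}$, and read off the initial conditions and error bounds from the Schr\"odinger-equation case. The only additions beyond the paper's proof are the explicit substitution of \eqref{h=} to verify $v_1'(\omega,A)=g'(A)$ and the Wronskian check of linear independence, both of which are fine.
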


\begin{proof}
Consider solutions (\ref{c}) and (\ref{s}) of (\ref{Schr}). The functions
\[
v_{1}(\omega,y)=\mathbf{L}^{-1}\left[  c(\omega,x)\right]  =\frac{1}{\rho
(y)}\left(  \cos\left(  \omega l(y)\right)  +2\sum_{n=0}^{\infty}(-1)^{n}%
\beta_{2n}(l(y))j_{2n}(\omega l(y))\right)
\]
and
\[
v_{2}(\omega,y)=\mathbf{L}^{-1}\left[  s(\omega,x)\right]  =\frac{1}{\rho
(y)}\left(  \sin\left(  \omega l(y)\right)  +2\sum_{n=0}^{\infty}(-1)^{n}%
\beta_{2n+1}(l(y))j_{2n+1}(\omega l(y))\right)
\]
are then solutions of (\ref{SL}) with $\omega^{2}=\lambda$. In these
representations all the magnitudes except the coefficients $\beta_{n}$ can be
defined with no reference to equation (\ref{Schr}). Let us show that the same
observation is applicable to the functions
\[
\alpha_{n}(y):=\frac{\beta_{n}(l(y))}{\rho(y)}=\mathbf{L}^{-1}\left[
\beta_{n}(x)\right]  ,\quad n=0,1,2,\ldots.
\]
Note that
\begin{equation}
\mathbf{L}^{-1}\left[  x^{k}u(x)\right]  =l^{k}(y)\frac{u(l(y))}{\rho
(y)}=l^{k}(y)\mathbf{L}^{-1}\left[  u(x)\right]  \label{propL-1}%
\end{equation}
for any function $u$ and any $k\in\mathbb{R}$. Then
\[
\alpha_{n}(y) =\frac{2n+1}{2}\mathbf{L}^{-1}\left[  \sum_{k=0}^{n}%
\frac{l_{k,n}\varphi_{k}(x)}{x^{k}}-1\right]  =\frac{2n+1}{2}\left(
\sum_{k=0}^{n}\frac{l_{k,n}\mathbf{L}^{-1}\left[  \varphi_{k}(x)\right]
}{l^{k}(y)}-\frac{1}{\rho(y)}\right)  .
\]
Due to Theorem \ref{relphikpsik} we obtain (\ref{alpha_n}) and thus the
solutions $v_{1}$ and $v_{2}$ of (\ref{SL}) have the form (\ref{v1}),
(\ref{v2}).

Estimates (\ref{v12 estimate}) and (\ref{v12 estimate C}) follow from
(\ref{estc1}) and (\ref{estc2}) by taking into account Proposition
\ref{Prop Liouville properties}.

Equalities (\ref{init v1}) and (\ref{init v2}) are obtained directly from
(\ref{init cs}). Indeed, due to statement 1. from Proposition
\ref{Prop Liouville properties}, $v_{1}(\omega,A)=\frac{c(\omega,0)}{\rho(A)}$
and $v_{2}(\omega,A)=\frac{s(\omega,0)}{\rho(A)}$ that gives us (\ref{init v1}%
). From the definition of the Liouville transformation we have the equality
$(\rho v)_{y}=u_{x}l_{y}$ for $u$ and $v$ being related by $u(x)=\mathbf{L}%
[v(y)]$, and thus,%
\begin{equation}
v_{y}=\frac{1}{\rho}\left(  \sqrt{\frac{r}{p}}u_{x}-\rho_{y}v\right)  ,
\label{der Liouville}%
\end{equation}
from where (\ref{init v2}) are derived.
\end{proof}

\begin{remark}
Inequalities \eqref{v12 estimate} and \eqref{v12 estimate C} show that the
representations \eqref{v1} and \eqref{v2} preserve the important property of
the representations \eqref{c} and \eqref{s}: the accuracy of approximation of
the exact solutions by the partial sums \eqref{v1N} and \eqref{v2N} does not
depend on $\omega$ meanwhile it belongs to a strip $\left\vert
\operatorname{Im}\omega\right\vert \leq C$ in the complex plane.
\end{remark}

\begin{remark}
Similarly to the coefficients $\beta_{n}$ (see Remark \ref{Rem eqs beta}) the
functions $\alpha_{n}$ can be constructed as solutions of the recurrent
equations%
\begin{equation}
\frac{1}{l^{n}(y)}\mathbf{C}\left[  l^{n}(y)\alpha_{n}(y)\right]  =\frac
{2n+1}{2n-3}l^{n-1}(y)\mathbf{C}\left[  \frac{\alpha_{n-2}(y)}{l^{n-1}%
(y)}\right]  \label{receqsalpha}
\end{equation}
obtained by applying $\mathbf{L}^{-1}$ to \eqref{receqsbeta} and taking into
account \eqref{propL-1} and the equality $\mathbf{L}^{-1}\mathbf{B}%
=\mathbf{CL}^{-1}$. The first functions of the sequence $\left\{  \alpha
_{n}\right\}  $ are given by
\begin{equation}
\alpha_{-1}=\mathbf{L}^{-1}\left[  \beta_{-1}\right]  =\frac{1}{2\rho}%
\qquad\text{and}\qquad\alpha_{0}=\mathbf{L}^{-1}\left[  \beta_{0}\right]
=\frac{1}{2}\left(  g-\frac{1}{\rho}\right)  . \label{alpha -1}
\end{equation}
The initial conditions satisfied by $\Sigma_{n}(y):=l^{n}(y)\alpha_{n}(y)$
have the form%
\begin{equation}
\Sigma_{n}(A)=\Sigma_{n}^{\prime}(A)=0. \label{initcondSigma}%
\end{equation}

This sequence of equations for $\left\{  \alpha_{n}\right\}  $ leads to a
stable recursive integration procedure which is proposed in Section
\ref{Sect Computation formulas}.
\end{remark}

\section{Representation of the derivatives of the solutions $v_{1}$ and
$v_{2}$}\label{Section 5}

Due to (\ref{der Liouville}) for $v_{1}(\omega,y)=\mathbf{L}^{-1}\left[
c(\omega,x)\right]  $ and $v_{2}(\omega,y)=\mathbf{L}^{-1}\left[
s(\omega,x)\right]  $ with the aid of (\ref{c prime (omega)}),
(\ref{s prime (omega)}) and (\ref{v1}), (\ref{v2}) we obtain
\begin{equation}%
\begin{split}
v_{1}^{\prime}(\omega,y)  & =\sqrt{\frac{r(y)}{p(y)}}\left(  \frac{1}{\rho
(y)}\left(  G_{1}(y)\cos(\omega l(y))-\omega\sin(\omega l(y))\right)
+2\sum_{n=0}^{\infty}(-1)^{n}\mu_{2n}(y)j_{2n}(\omega l(y))\right) \\
& \quad-\frac{\rho^{\prime}(y)}{\rho(y)}v_{1}(\omega,y)\label{v1 prime}%
\end{split}
\end{equation}
and
\begin{equation}%
\begin{split}
v_{2}^{\prime}(\omega,y)  &  =\sqrt{\frac{r(y)}{p(y)}}\left(  \frac{1}%
{\rho(y)}\left(  G_{2}(y)\sin(\omega l(y))+\omega\cos(\omega l(y))\right)
+2\sum_{n=0}^{\infty}(-1)^{n}\mu_{2n+1}(y)j_{2n+1}(\omega l(y))\right) \\
& \quad-\frac{\rho^{\prime}(y)}{\rho(y)}v_{2}(\omega,y)\label{v2 prime}%
\end{split}
\end{equation}
where
\[
\mu_{n}(y):=\frac{\gamma_{n}(l(y))}{\rho(y)}=\mathbf{L}^{-1}\left[  \gamma
_{n}(x)\right]  ,\quad n=0,1,2,\ldots,
\]
and the functions $G_{1}$ and $G_{2}$ have the form (cf. \cite[Remark
4.7]{KMT})
\begin{equation}
\label{G1(y)}
\begin{split}
G_{1}(y):= & h+\frac{1}{2}\int_{0}^{l(y)}Q(s)\,ds=h+\frac{1}{2}\int_{A}%
^{y}\frac{1}{(pr)^{1/4}}\left(  \frac{q}{(pr)^{1/4}}-[p\{(pr)^{-1/4}%
\}^{\prime}]^{\prime}\right)  (s)ds\\
= &  h + \left. \frac{\rho\rho^{\prime}}{2r} \right| _{A}^{y} +\frac12
\int_{A}^{y} \left[  \frac{q}{\rho^{2}}+ \frac{(\rho^{\prime})^{2}}r\right]
(s)\,ds
\end{split}
\end{equation}
and
\begin{equation}
G_{2}(y):=\frac{1}{2}\int_{0}^{l(y)}Q(s)\,ds=G_{1}(y)-h. \label{G2(y)}%
\end{equation}
Let us obtain a formula for calculating $\mu_{n}$ which would not involve
magnitudes related to equation (\ref{Schr}) but only those related to
(\ref{SL}). Consider (\ref{gamma n}). Direct calculation gives us the
relation
\[
\frac{f^{\prime}(x)}{f(x)}=\sqrt{\frac{p(y)}{r(y)}}\left(  \frac{g^{\prime
}(y)}{g(y)}+\frac{\rho^{\prime}(y)}{\rho(y)}\right)  .
\]
Hence, due to Theorem \ref{relphikpsik},
\[
k\psi_{k-1}(x)+f^{\prime}(x)\varphi_{k}(x)/f(x)=k\frac{\Psi_{k-1}(y)}{\rho
(y)}+\rho(y)\sqrt{\frac{p(y)}{r(y)}}\left(  \frac{g^{\prime}(y)}{g(y)}%
+\frac{\rho^{\prime}(y)}{\rho(y)}\right)  \Phi_{k}(y).
\]
Thus,
\begin{equation}%
\begin{split}
\mu_{n}(y)  &  =\frac{2n+1}{2\rho(y)}\left(  \sum_{k=0}^{n}\frac{l_{k,n}%
}{l^{k}(y)}\left(  k\frac{\Psi_{k-1}(y)}{\rho(y)}+\rho(y)\sqrt{\frac
{p(y)}{r(y)}}\left(  \frac{g^{\prime}(y)}{g(y)}+\frac{\rho^{\prime}(y)}%
{\rho(y)}\right)  \Phi_{k}(y)\right)  \right. \\
&  \quad\left.  -\frac{n(n+1)}{2l(y)}-G_{2}(y)-\frac{h}{2}\left(
1+(-1)^{n}\right)  \right)  .\label{mu_n}%
\end{split}
\end{equation}
This is a direct formula for calculating $\mu_{n}$ in terms of the functions
$\Phi_{k}$ and $\Psi_{k}$. A recurrent formula for $\mu_{n}$ can be obtained
from (\ref{sequence of equations for gamma}). We have%
\begin{align}
\mu_{-1}(y) & = \frac{G_{2}(y)}{2\rho(y)},\label{mu -1}\\
\mu_{0}(y) & =\sqrt{\frac{p(y)}{r(y)}}\left(  \alpha_{0}^{\prime}%
(y)+\frac{\rho^{\prime}(y)}{\rho(y)}\alpha_{0}(y)\right)  -\frac{G_{1}%
(y)}{2\rho(y)} = \sqrt{\frac{p(y)}{r(y)}} \frac{\bigl(g(y) \rho
(y)\bigr)^{\prime}}{2\rho(y)}-\frac{G_{1}(y)}{2\rho(y)},\label{mu 0}%
\end{align}
and%
\begin{multline}\label{mu n}
\mu_{n}(y)=\frac{n\alpha_{n}(y)}{l(y)}+\sqrt{\frac{p(y)}{r(y)}}\left(
\alpha_{n}^{\prime}(y)+\frac{\rho^{\prime}(y)}{\rho(y)}\alpha_{n}(y)\right) \\
+\frac{2n+1}{2n-3}\left(  \mu_{n-2}(y)-\sqrt{\frac{p(y)}{r(y)}}\left(
\alpha_{n-2}^{\prime}(y)+\frac{\rho^{\prime}(y)}{\rho(y)}\alpha_{n-2}%
(y)\right)  +\frac{n-1}{l(y)}\alpha_{n-2}(y)\right)  ,\quad n=1,2,\ldots.
\end{multline}
One may also use
\begin{equation}
\mu_{1}(y)=\frac{\alpha_{1}(y)}{l(y)}+\sqrt{\frac{p(y)}{r(y)}}\left(
\alpha_{1}^{\prime}(y)+\frac{\rho^{\prime}(y)}{\rho(y)}\alpha_{1}(y)\right)
-\frac{3G_{2}(y)}{2\rho(y)}.\label{mu 1}%
\end{equation}

\section{Computation formulas for the
coefficients\label{Sect Computation formulas}}

Besides the direct formulas (\ref{alpha_n}) and (\ref{mu_n}) for the
coefficients $\alpha_{n}$ and $\mu_{n}$ appearing in the representations
(\ref{v1}), (\ref{v2}), (\ref{v1 prime}) and (\ref{v2 prime}) it is convenient
to have formulas more appropriate for practical computation. The main drawback
of formulas (\ref{alpha_n}) and (\ref{mu_n}) for numerical computing is the
presence of the Legendre polynomial coefficients $l_{k,n}$ which grow rather
fast with a growing $n$. Here we derive a recurrent integration procedure for
computing $\alpha_{n}$ and $\mu_{n}$, convenient for numerical applications.

Formulas from Remark \ref{Rem Integration procedure} can be used for writing
the recurrent integration procedure aimed to compute the functions
\begin{equation}
\label{Sigma and Upsilon}\Sigma_{n}(y):=\mathbf{L}^{-1}\left[  \sigma
_{n}(x)\right]  =l^{n}(y)\alpha_{n}(y)\qquad\text{and}\qquad\Upsilon
_{n}(y):=\mathbf{L}^{-1}\left[  \tau_{n}(x)\right]  =l^{n}(y)\mu_{n}(y).
\end{equation}
From (\ref{sigma n}) we obtain%
\[
\Sigma_{n}(y)=\frac{2n+1}{2n-3}\left(  l^{2}(y)\Sigma_{n-2}(y)+c_{n}%
\mathbf{L}^{-1}\left[  f(x)\theta_{n}(x)\right]  \right) ,
\]
where $c_{n}=1$ if $n=1$ and $c_{n}=2(2n-1)$ otherwise.

Denote $\widetilde{\theta}_{n}(y):=\theta_{n}(l(y))$. Then $\mathbf{L}%
^{-1}\left[  f(x)\theta_{n}(x)\right]  =g(y)\widetilde{\theta}_{n}(y)$. Hence%
\begin{equation}
\Sigma_{n}(y)=\frac{2n+1}{2n-3}\left(  l^{2}(y)\Sigma_{n-2}(y)+c_{n}%
g(y)\widetilde{\theta}_{n}(y)\right)  . \label{Sigma n}%
\end{equation}
Analogously, for $n\in\mathbb{N}$,
\begin{equation}%
\begin{split}
\Upsilon_{n}(y)  &  =\frac{2n+1}{2n-3}\left(  l^{2}(y)\Upsilon_{n-2}%
(y)+c_{n}\left(  \sqrt{\frac{p(y)}{r(y)}}\left(  g^{\prime}(y)\rho
(y)+g(y)\rho^{\prime}(y)\right)  \frac{\widetilde{\theta}_{n}(y)}{\rho
(y)}+\frac{\widetilde{\eta}_{n}(y)}{\rho^{2}(y)g(y)}\right)  \right.
\label{Tau n}\\
&  \quad\left.  -\left(  c_{n}-2n+1\right)  l(y)\Sigma_{n-2}(y)\right)
\end{split}
\end{equation}
where $\widetilde{\eta}_{n}(y):=\eta_{n}(l(y))$.

A simple change of the integration variable $t=l(\tau)$ in
(\ref{eta n AND theta n}) gives us the equalities%
\begin{equation}
\widetilde{\eta}_{n}(y)=\int_{A}^{y}\left(  l(\tau)\left(  g^{\prime}%
(\tau)\rho(\tau)+g(\tau)\rho^{\prime}(\tau)\right)  +(n-1)\rho(\tau
)g(\tau)\sqrt{\frac{r(\tau)}{p(\tau)}}\right)  \rho(\tau)\Sigma_{n-2}%
(\tau)d\tau\label{eta tilde}%
\end{equation}
and
\begin{equation}
\widetilde{\theta}_{n}(y)=\int_{A}^{y}\left(  \frac{\widetilde{\eta}_{n}%
(\tau)}{\rho^{2}(\tau)g^{2}(\tau)}-\frac{l(\tau)\Sigma_{n-2}(\tau)}{g(\tau
)}\right)  \sqrt{\frac{r(\tau)}{p(\tau)}}d\tau. \label{theta tilde}%
\end{equation}

\section{Description of the numerical algorithm}

\label{SectNumAlg}

Numerical solution of equation (\ref{SL}) for different values of $\lambda$
can be computed following the sequence:

\begin{enumerate}
\item Find a nonvanishing solution $g$ of (\ref{SLhom}) on $[A,B]$ satisfying
the initial condition (\ref{g(y0)}). The solution $g$ can be constructed using
the SPPS representation, see, e.g., \cite{KrPorter2010} for details, or by
means of any other numerical method.

\item Compute the constant $h$ from (\ref{h=}), $\alpha_{-1}$ and $\alpha_{0}$
from (\ref{alpha -1}), $G_{1}$ and $G_{2}$ from (\ref{G1(y)}) and
(\ref{G2(y)}), $\mu_{-1}$ and $\mu_{0}$ from (\ref{mu -1}) and (\ref{mu 0}).

\item Compute the set of functions $\left\{  \Sigma_{n},\Upsilon
_{n},\widetilde{\eta}_{n},\widetilde{\theta}_{n}\right\}  _{n=1}^{N}$ which
gives us the coefficients $\alpha_{n}=\Sigma_{n}/l^{n}$ and $\mu_{n}%
=\Upsilon_{n}/l^{n}$.

\item Compute the approximations of the solutions $v_{1}$ and $v_{2}$ and of
their derivatives $v_{1}^{\prime}$ and $v_{2}^{\prime}$ using (\ref{v1}),
(\ref{v2}) and (\ref{v1 prime}), (\ref{v2 prime}) with the infinite series
replaced by corresponding partial sums.

\item Use the initial conditions (\ref{init v1}) and (\ref{init v2}) satisfied
by $v_{1}$ and $v_{2}$ to obtain a solution satisfying initial conditions
prescribed for solving an initial or boundary value problem and/or a spectral
problem related to (\ref{SL}).
\end{enumerate}

We refer the reader to \cite{KT AnalyticApprox}, \cite{KMT}, \cite{KNT} for
implementation details regarding the recurrent integration and computation of
Bessel functions.

Additionally we would like to point out that numerical computation of the
coefficients $\alpha_{n}$ and $\mu_{n}$ either via direct formulas
\eqref{alpha_n} and \eqref{mu_n} or via recurrent formulas
\eqref{Sigma and Upsilon}, \eqref{Sigma n} and \eqref{Tau n} can be tricky for
values of $y$ near the point $y=A$. One of the reasons is that in recursive
formulas an absolute error in any coefficient $\alpha_{n}$ propagates to all
the following coefficients $\alpha_{n+2k}$ and in the direct formulas absolute
errors in the first functions $\Phi_{k}$ are get multiplied by quite large
Legendre polynomials' coefficients $l_{k,n}$. Another reason is the division
by large powers of $l(y)$ in both \eqref{Sigma and Upsilon} and
\eqref{alpha_n}, \eqref{mu_n}. I.e., the coefficients $\alpha_{n}$ and
$\mu_{n}$ possess absolute errors which we can not expect to decrease as
$n\to\infty$, while the absolute values of the coefficients $\alpha_{n}$ and
$\mu_{n}$ decay as $n\to\infty$ and are bounded by decaying functions as $y\to
A$, see Proposition \ref{Prop Decay betas} and Corollary
\ref{Corr decay betas}. That is, the computed coefficients will possess large
relative errors especially near the point $y=A$.

One possibility to overcome the difficulties mentioned above is to change the
values of the computed coefficients $\alpha_{n}$ and $\mu_{n}$ near $y=A$ by
zero, i.e., we take $\alpha_{n}(y)=0$ for all $A\leq y\leq y_{n}$ and $\mu
_{n}(y)=0$ for all $A\leq y\leq\tilde{y}_{n}$. The numbers $y_{n}$ and
$\tilde{y}_{n}$ may be estimated using the following equalities (which easily
follow from \eqref{K GoursatCond}, \eqref{K1 GoursatCond} and
\eqref{K and K1 Legendre} by applying the operator $\mathbf{L}^{-1}$ and
\eqref{propL-1}).
\begin{equation}
\sum_{n=0}^{\infty}\frac{\alpha_{n}(y)}{l(y)}=\frac{G_{1}(y)+G_{2}(y)}{2\rho(y)}%
,\qquad\sum_{n=0}^{\infty}\frac{(-1)^{n}\alpha_{n}(y)}{l(y)}=\frac{h}%
{2\rho(y)}\label{alpha n verification}%
\end{equation}
and
\begin{align}
\sum_{n=0}^{\infty}\frac{\mu_{n}(y)}{l(y)} &  =\frac{q(y)}{4\rho(y)r(y)}%
-\frac{1}{4r(y)}\left[  p(y)\left(  \frac{1}{\rho(y)}\right)  ^{\prime
}\right]  ^{\prime}+\frac{hG_{2}(y)+G_{2}^{2}(y)}{2\rho(y)}%
,\label{mu n verification}\\
\sum_{n=0}^{\infty}\frac{(-1)^{n}\mu_{n}(y)}{l(y)} &  =\frac{1}{4\rho
(y)}\left(  \frac{q(A)}{r(A)}-\frac{\rho(A)}{r(A)}\left.  \left[  p(y)\left(
\frac{1}{\rho(y)}\right)  ^{\prime}\right]  ^{\prime}\right\vert
_{y=A}\right)  +\frac{hG_{2}(y)}{2\rho(y)}.\label{mu n verification2}
\end{align}
One computes differences between the right-hand sides of
\eqref{alpha n verification}, \eqref{mu n verification},
\eqref{mu n verification2} and partial sums of the series from the left-hand
sides, and finds the moment when these differences cease to decrease and reach
some floor value as $N$ (the number of terms in the partial sums) grows.
Analyzing these values of $N$ for each $y$ one may estimate the values $y_{n}$
and $\tilde{y}_{n}$.

Another simple rule for choosing $y_{n}$ and $\tilde y_{n}$ which we found
being quite acceptable numerically consists in the following. After computing
a coefficient $\alpha_{n}$ for some $n$, we take $y_{n}$ as a point in a
neighborhood of $y=A$ where $\alpha_n(y)\ne 0$ and  $|\alpha_{n}(y)|$ attains its minimum. The same
procedure for the coefficients $\mu_{n}$. As an explanation for this rule we
refer to Proposition \ref{Prop Decay betas} and Corollary
\ref{Corr decay betas} from which it follows that the coefficients $\alpha
_{n}$ and $\mu_{n}$ are bounded by some powers of $l(y)$ near $y=A$ and
$l(y)\to0$ as $y\to A$.

It is worth mentioning that as follows from the inequality \cite[(9.1.62)]%
{Abramowitz}
\[
|j_{n}(z)|\leq\sqrt{\pi}\left\vert \frac{z}{2}\right\vert ^{n}\frac
{e^{\operatorname{Im}z}}{\Gamma(n+3/2)},\qquad z\in\mathbb{C},
\]
the values of the function $j_{n}(z)$ for small $z$ and large $n$ are so small
(for example, $j_{40}(1)\approx1.5\cdot10^{-61}$, $j_{40}(10)\approx
8.4\cdot10^{-22}$) that even huge errors in the coefficients $\alpha_{n}(y)$
and $\mu_{n}(y)$ in a neighborhood of $y=A$ do not lead to significant errors
in the approximate solutions. Only for large values of $\omega$ (corresponding
to 100th eigenvalue and further) the error can be noticeable, see Section
\ref{Sect Num Examples} for an illustration.

\section{Numerical experiments}

\label{Sect Num Examples} We implemented the algorithm from Section
\ref{SectNumAlg} in Matlab 2012 in machine precision arithmetics. All the
calculations were performed as was described in \cite{KMT}, \cite{KNT},
\cite{KTC}. We opted for a straightforward implementation to illustrate that
the proposed method is capable to produce highly accurate results as is.
Clearly it can benefit even more being combined with such techniques as
interval subdivision etc.

We considered the following equation \cite{KMT}, \cite[Eqn. 2.273(11)]{Kamke}
\begin{equation}
u^{\prime\prime}-2u^{\prime}+u=-\lambda(y^{2}+1)u,\qquad y\in\lbrack
0,2]\label{Num Eq1}%
\end{equation}
together with the boundary conditions
\begin{equation}
u(0)-u^{\prime}(0)=0,\qquad u(2)+u^{\prime}(2)=0.\label{Num Eq1 BC}%
\end{equation}
Equation \eqref{Num Eq1} can be transformed into the form \eqref{SL} with
$p(y)=e^{-2y}$, $q(y)=-e^{-2y}$ and $r(y)=(y^{2}+1)e^{-2y}$.

The algorithm from our previous paper \cite{KMT} was able to compute the first
100 eigenvalues for this problem with the maximum absolute error of
$4.7\cdot10^{-7}$ and maximum relative error of $8.5\cdot10^{-9}$. Though such
accuracy is quite impressive, it is far from the limitations of the machine
double precision, when one can expect relative errors as low as $2.3\cdot
10^{-16}$. This was a motivation for us to illustrate the numerical
performance of the new solution representations by showing that they can
produce significantly more accurate results for the same problem requiring
similar computation resources.

We used 2001 uniformly spaced points to represent all the functions involved
and computed the coefficients $\alpha_{n}$ and $\mu_{n}$ for $n\leq50$. The
coefficients were computed \textquotedblleft as is\textquotedblright, without
any special care about their behavior near $y=A$. Criteria based on the
equalities \eqref{alpha n verification}--\eqref{mu n verification2} showed
that optimal value for $N$ for the approximate solutions was $N=38$, which was
used to compute the first 100 eigenvalues. On Figure \ref{Ex1Fig1} we present
absolute and relative errors of the computed eigenvalues. The maximal absolute
error was $1.3\cdot10^{-11}$, and the maximal relative error was
$2.5\cdot10^{-15}$. The computation time on a computer equipped with Intel
i7-3770 CPU was about 0.25 seconds (a significant speed-up was achieved by
computing the spherical Bessel functions recurrently, see \cite{Barnett},
\cite{GillmanFiebig} and \cite{KNT} for details). \begin{figure}[tbh]
\centering
\includegraphics[bb=126 306 486 486, width=5in,height=2.5in]
{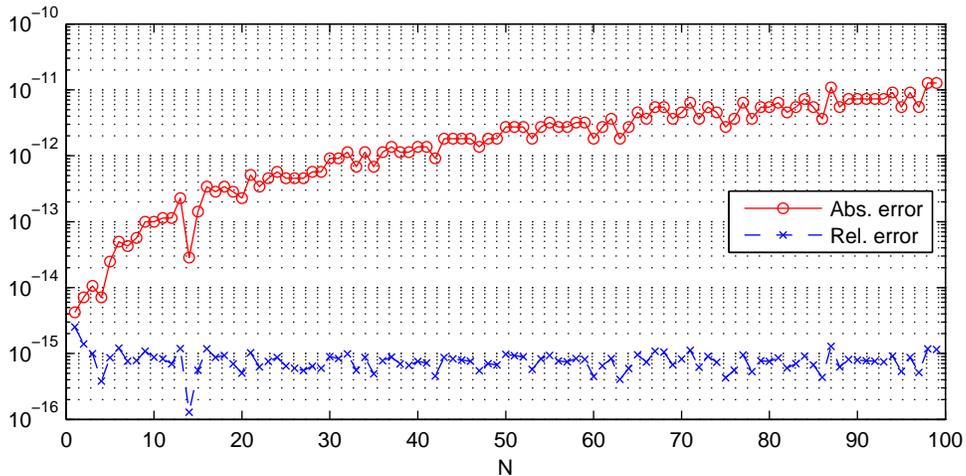}\caption{Absolute and relative errors of the first 100
eigenvalues for the spectral problem \eqref{Num Eq1}, \eqref{Num Eq1 BC}.}%
\label{Ex1Fig1}%
\end{figure}

\begin{figure}[tbh]
\centering
\includegraphics[bb=198 306 414 486, width=3in,height=2.5in]
{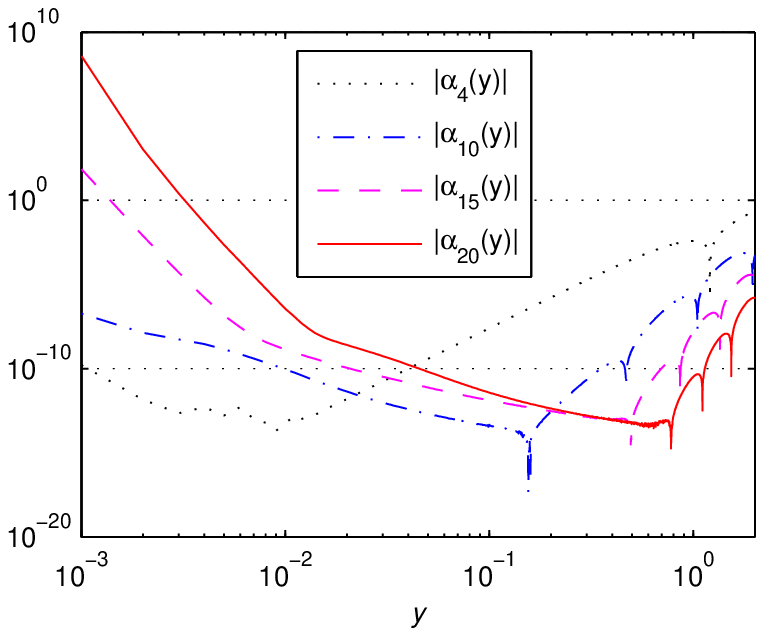}\quad\includegraphics[bb=198 306 414 486, width=3in,height=2.5in]
{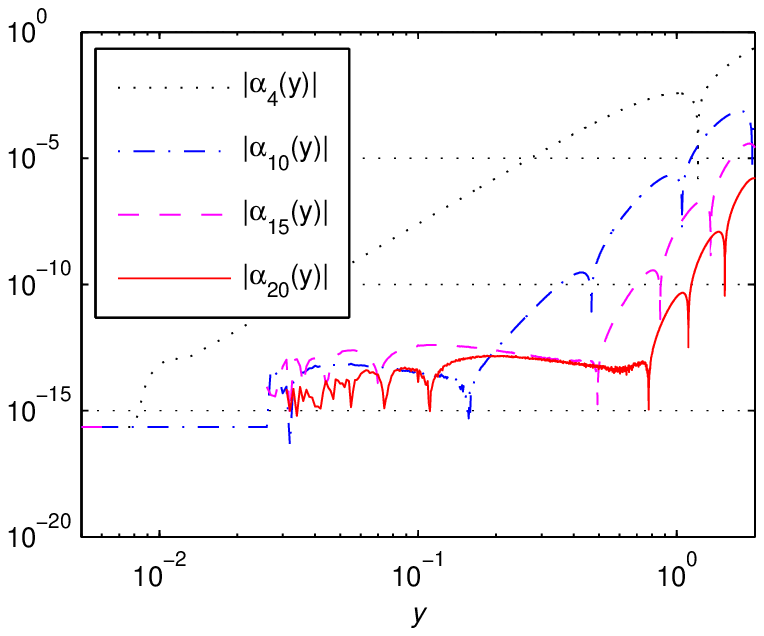}\caption{Absolute values of the coefficients $\alpha_{k}(y)$ for
$k=4,10,15,20$. On the left plot: calculated directly using the recurrent
formulas \eqref{Sigma and Upsilon}, \eqref{Sigma n}. Note the growth of the
values $|\alpha_{k}(y)|$ as $y\to0$, especially for large values of $k$. On
the right plot: calculated using the workaround proposed at the end of Section
\ref{SectNumAlg}. }%
\label{Ex1Fig2}%
\end{figure}

On Figure \ref{Ex1Fig2} we illustrate the difficulty in the computation of the
coefficients $\alpha_{n}$ and $\mu_{n}$ explained at the end of Section
\ref{SectNumAlg}. Direct implementation of the recurrent formulas
\eqref{Sigma and Upsilon}, \eqref{Sigma n} leads to large errors in the
coefficients in a neighborhood of $y=0$, while the coefficients computed using
the proposed workaround remain bounded in the same region.

In the last experiment we verified the accuracy of the approximate solutions.
One of the exact solutions of equation \eqref{Num Eq1} has the form
\[
u(y)=\exp\left(  y-\frac{iy^{2}\omega}{2}\right)  {}_{1}F_{1}\left(
\frac{1+i\omega}{4},\frac{1}{2},iy^{2}\omega\right)  ,
\]
where $\lambda=\omega^{2}$ and ${}_{1}F_{1}$ is the Kummer confluent
hypergeometric function. It satisfies the initial conditions $u(0)=u^{\prime
}(0)=1$. We calculated this solution numerically for three different values of
$\omega$, $52$, $105$ and $210$ (being close to 50th, 100th and 200th
eigenvalues respectively). $N=38$ was used for the approximate solutions. The
coefficients $\alpha_{n}$ and $\mu_{n}$ were computed in two different ways:
directly using the recurrent formulas and with a special care for small values
of $y$ (as was explained in Section \ref{SectNumAlg}). On Figure \ref{Ex1Fig3}
we present the absolute errors of the approximate solutions. We omitted the
case $\omega=52$ since the errors are indistinguishable on the plots. For
$\omega=105$ or $\omega=210$ one starts to observe additional errors for small
values of $y$ due to errors in the coefficients $\alpha_{n}$ and $\mu_{n}$.

\begin{figure}[tbh]
\centering
Absolute errors of the approximate solution for $\omega=105$.
\par
\includegraphics[bb=198 324 414 468, width=3in,height=2in]
{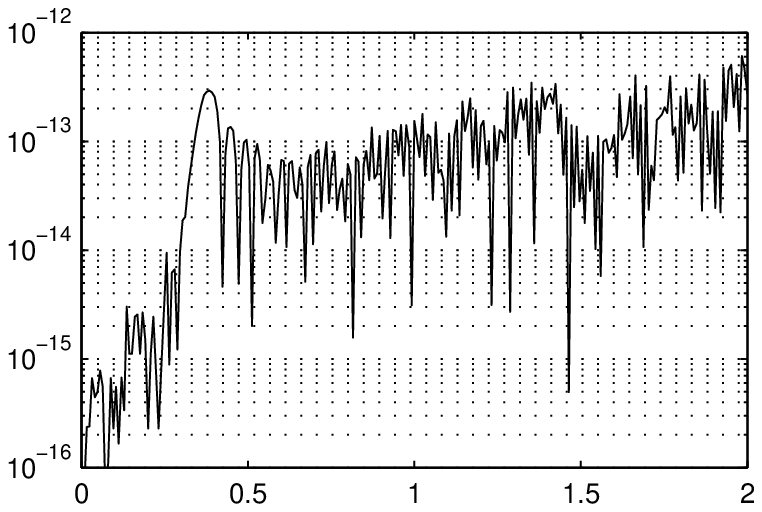}\quad\includegraphics[bb=198 324 414 468, width=3in,height=2in]
{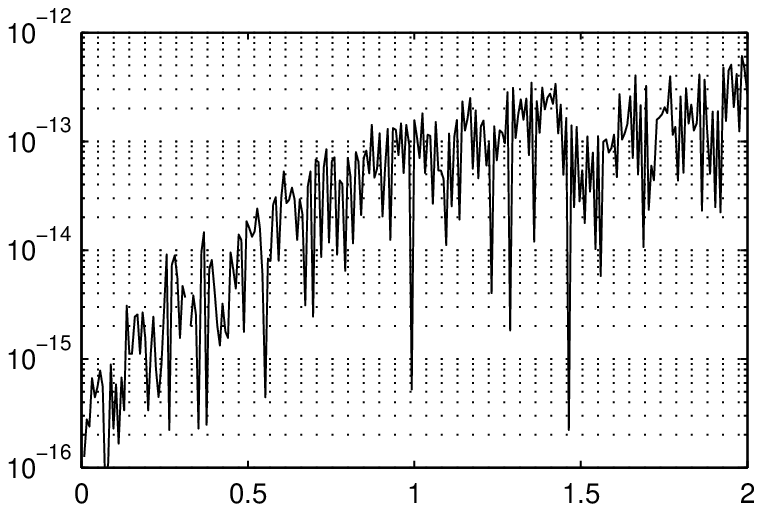}
\par
Absolute errors of the approximate solution for $\omega=210$.
\par
\includegraphics[bb=198 324 414 468, width=3in,height=2in]
{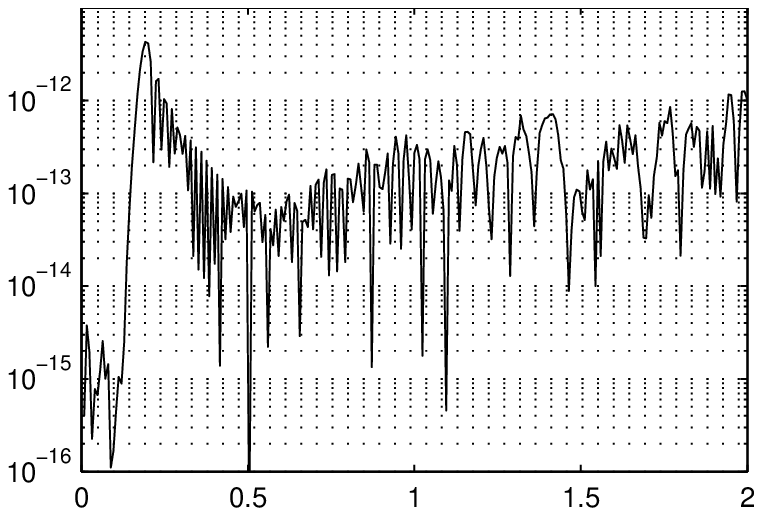}\quad\includegraphics[bb=198 324 414 468, width=3in,height=2in]
{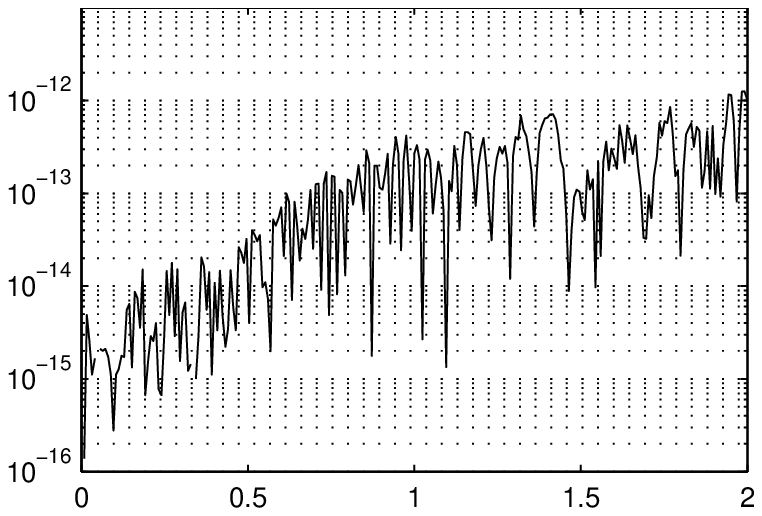}\caption{Absolute errors of the approximate solution $u_{N}%
(\omega, y)$. On the left: using the coefficients $\alpha_{n}(y)$ calculated
directly, without any special treatment for small values of $y$. On the right:
using the coefficients $\alpha_{n}(y)$ calculated applying the workaround
proposed at the end of Section \ref{SectNumAlg}. }%
\label{Ex1Fig3}%
\end{figure}

\appendix

\section{Error and decay rate estimates}

\label{Appendix Errors}

In this appendix some estimates for the functions $\varepsilon_{N}$ and
$\varepsilon_{1,N}$ from Theorems
\ref{Th Representation of solutions via Bessel} and
\ref{Th Derivatives Schrod} are presented. Also decay rate estimates and
bounds near $x=0$ for the coefficients $\beta_{n}$ and $\gamma_{n}$ are
obtained in dependence on the smoothness of the potential $Q$. The estimates
are from \cite{KNT} with some additional improvements obtained using ideas
from \cite{KTC}.

It is well known (see, e.g., \cite{LevitanInverse}, \cite{Marchenko},
\cite{Marchenko52}, \cite{Trimeche}) that the solutions $c(\omega,x)$ and
$s(\omega,x)$ of \eqref{Schr} satisfying the initial conditions
\eqref{init cs} can be represented as
\[
c(\omega,x)=\cos\omega x+\int_{-x}^{x}K(x,t)\cos\omega t\,dt
\]
and
\[
s(\omega,x)=\sin\omega x+\int_{-x}^{x}K(x,t)\sin\omega t\,dt,
\]
i.e., as images of the functions $\cos\omega t$ and $\sin\omega t$ under the
action of the so-called transmutation operator $T$. This operator is defined
on an arbitrary integrable function by the rule
\begin{equation}
Tu(x)=u(x)+\int_{-x}^{x}K(x,t)u(t)\,dt.\label{T def}
\end{equation}
Its integral kernel $K$ satisfies the equalities
\begin{equation}
K(x,x)=\frac{h}{2}+\frac{1}{2}\int_{0}^{x}Q(s)\,ds,\qquad K(x,-x)=\frac{h}%
{2},\label{K GoursatCond}
\end{equation}
and the derivative $\partial_{x}K=:K_{1}$ satisfies the equalities (see
\cite{KT FuncProp})
\begin{equation}
K_{1}(x,x)=\frac{Q(x)}{4}+\frac{h}{4}\int_{0}^{x}Q(s)\,ds+\frac{1}%
{8}\biggl(\int_{0}^{x}Q(s)\,ds\biggr)^{2},\quad K_{1}(x,-x)=\frac{1}%
{4}\biggl(Q(0)+h\int_{0}^{x}Q(s)\,ds\biggr).\label{K1 GoursatCond}%
\end{equation}

The integral kernel $K$ is one degree smoother than the potential $Q$. To be
more precise, let $Q\in W_{\infty}^{p}[0,b]$, $p\ge0$, i.e., the function $Q$
possesses $p$ derivatives, the last one belonging to $L_{\infty}(0,b)$. In
such case the integral kernel $K$ possesses $p+1$ derivative with respect to
each variable. In particular, for each $x>0$, $K(x,\cdot)\in W_{\infty}%
^{p+1}[-x,x]$ and the norms $\| \partial^{p+1}_{t} K(x,t)\|_{L_{\infty}%
(-x,x)}$ are bounded on $[0,b]$.

The following result shows that the coefficients $\beta_{n}$ and $\gamma_{n}$
defined by \eqref{beta direct definition} and \eqref{gamma n} are the
Fourier-Legendre coefficients of the integral kernel $K$ and its derivative
$K_{1}:=\partial_{x} K$, respectively.

\begin{theorem}
[\cite{KNT}]Let $Q\in L_{\infty}(0,b)$. The transmutation kernel $K$ and its
derivative $K_{1}$ have the form
\begin{equation}
\label{K and K1 Legendre}K(x,t) = \sum_{j=0}^{\infty}\frac{\beta_{j}(x)}{x}
P_{j}\left( \frac tx\right)  \qquad\text{and}\qquad K_{1}(x,t) = \sum
_{j=0}^{\infty}\frac{\gamma_{j}(x)}{x} P_{j}\left( \frac tx\right) ,
\end{equation}
here $P_{n}$ denotes the classical Legendre polynomials. The coefficients
$\beta_{n}$ and $\gamma_{n}$, $n\ge0$, can be recovered using the formulas
\begin{equation}
\label{betan gamman via K}\beta_{n}(x) = \frac{2n+1}{2}\int_{-x}^{x} K(x,t)
P_{n}\left( \frac tx\right) \,dt\qquad\text{and}\qquad\gamma_{n}(x) =
\frac{2n+1}{2}\int_{-x}^{x} K_{1}(x,t) P_{n}\left( \frac tx\right) \,dt.
\end{equation}

\end{theorem}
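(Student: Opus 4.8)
The plan is to establish the Fourier--Legendre expansion \eqref{K and K1 Legendre} by showing that, for fixed $x>0$, the functions $\beta_n(x)$ defined by \eqref{beta direct definition} coincide with the Fourier--Legendre coefficients $\frac{2n+1}{2}\int_{-x}^x K(x,t)P_n(t/x)\,dt$ of the kernel $K(x,\cdot)$, and similarly for $\gamma_n$ and $K_1$. Since $Q\in L_\infty(0,b)$ guarantees (by the smoothing property recalled just above the statement) that $K(x,\cdot)\in W_\infty^1[-x,x]\subset L^2[-x,x]$, the rescaled functions $t\mapsto K(x,xt)$ lie in $L^2[-1,1]$, so they admit a convergent Fourier--Legendre expansion; the only thing to prove is the \emph{identification} of the coefficients. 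Thus the whole theorem reduces to the formulas \eqref{betan gamman via K}, and \eqref{K and K1 Legendre} then follows from the standard $L^2[-1,1]$ completeness of $\{P_n\}$ together with the change of variables $t=xs$.

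First I would recall from \cite{KNT} that the solution $c(\omega,x)$ has the transmutation representation $c(\omega,x)=\cos\omega x+\int_{-x}^x K(x,t)\cos\omega t\,dt$, and on the other hand the NSBF representation \eqref{c}. The bridge is the classical generating-type identity expressing powers $x^k$ (equivalently, the formal powers $\varphi_k$ through the SPPS series) in terms of the spherical Bessel functions $j_n(\omega x)$: the key fact is that $\cos\omega x$ and the monomials in $\omega$ arising from $\varphi_k(x)$ can be re-summed using the expansion of $\cos\omega t$ in Legendre polynomials $P_n(t/x)$ evaluated via $\int_{-x}^x P_n(t/x)\cos\omega t\,dt = \frac{2x}{i^n}\,j_n(\omega x)$ (up to the normalization matching $j_n$), so that the $n$-th Fourier--Legendre coefficient of $K(x,\cdot)$ is precisely the coefficient multiplying $j_n(\omega x)$ in the NSBF series, namely $2(-1)^{\lfloor\cdot\rfloor}\beta_n(x)$ absorbed into the standard normalization, i.e.\ $\beta_n(x)=\frac{2n+1}{2}\int_{-x}^x K(x,t)P_n(t/x)\,dt$. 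Concretely, one substitutes the Legendre expansion of $K(x,\cdot)$ into the transmutation integral, integrates term by term using the cited Bessel integral, and matches coefficients of $j_n(\omega x)$ with \eqref{c}, \eqref{s}; by uniqueness of the NSBF coefficients (the functions $\{j_n(\omega x)\}_n$ being linearly independent over a suitable function space in $\omega$) one reads off \eqref{betan gamman via K} for $\beta_n$. For $\gamma_n$ and $K_1$, I would differentiate the transmutation representation of $c(\omega,x)$ in $x$, using the Goursat conditions \eqref{K GoursatCond} to handle the boundary terms from the Leibniz rule, obtaining $c'(\omega,x)$ in the form \eqref{c prime (omega)} with $K_1=\partial_x K$ playing the role of $K$; the same term-by-term Bessel computation then yields the second formula in \eqref{betan gamman via K}.

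The main obstacle is the rigorous justification of the term-by-term integration and re-summation: one must control the convergence of the Legendre series of $K(x,\cdot)$ well enough to interchange it with the integral against $\cos\omega t$, and then interchange the resulting infinite sum with the (already convergent) NSBF series uniformly in $\omega$ on compacts. This is where the smoothing property $K(x,\cdot)\in W_\infty^1$ is essential — it gives $O(n^{-1})$ (or better) decay of the Fourier--Legendre coefficients $\beta_n(x)$, hence absolute and uniform convergence of the series $\sum_n \frac{\beta_j(x)}{x}P_j(t/x)$ after one integration, which legitimizes all the interchanges. Once the coefficient identification \eqref{betan gamman via K} is in place, the representation \eqref{K and K1 Legendre} is immediate from $L^2$ completeness of the Legendre polynomials on $[-1,1]$, and the recovery formulas are just the definition of Fourier--Legendre coefficients. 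I would present the argument for $K$ in full and indicate that the argument for $K_1$ is identical after differentiating in $x$ and invoking \eqref{K1 GoursatCond} in place of \eqref{K GoursatCond}.
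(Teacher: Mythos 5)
First, a structural point: the paper does not prove this theorem --- it is imported verbatim from \cite{KNT} --- so there is no in-paper argument to compare against. Judged on its own, your proposal is workable in outline, but it inverts the natural logical order and misses the one-line mechanism that makes the coefficient identification immediate. The identity you never invoke is the action of the transmutation operator \eqref{T def} on powers, $T[t^k]=\varphi_k(x)$, i.e.
\[
\varphi_k(x)=x^k+\int_{-x}^{x}K(x,t)\,t^k\,dt .
\]
Given this, \eqref{betan gamman via K} follows directly from \eqref{beta direct definition}: expand $P_n(t/x)=\sum_{k=0}^n l_{k,n}(t/x)^k$, integrate term by term, and use $\sum_k l_{k,n}=P_n(1)=1$. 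The $\gamma_n$ case is the same computation after differentiating the displayed identity in $x$, where the Goursat conditions \eqref{K GoursatCond} produce the boundary terms and the extra summands $-\frac{n(n+1)}{2x}-\frac12\int_0^xQ-\frac h2\left(1+(-1)^n\right)$ in \eqref{gamma n} appear via $P_n'(1)=n(n+1)/2$ and $P_n(\pm1)=(\pm1)^n$. The expansions \eqref{K and K1 Legendre} are then, as you say, just completeness of $\{P_n\}$ in $L^2(-1,1)$ after rescaling.

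Your route instead takes the NSBF representations \eqref{c}, \eqref{s} of Theorem \ref{Th Representation of solutions via Bessel} as given and recovers \eqref{betan gamman via K} by matching Neumann-series coefficients. Two caveats. First, in \cite{KNT} the representation \eqref{c} with the explicit coefficients \eqref{beta direct definition} is itself \emph{derived from} the present theorem, so your argument is circular at the source; within the present paper, where Theorem \ref{Th Representation of solutions via Bessel} is quoted as a black box, the circularity is only latent, but it should be acknowledged. Second, ``uniqueness of NSBF coefficients'' via ``linear independence of the $j_n$'' is not yet a proof; the clean justification is that if the difference kernel $\tilde K(x,\cdot)\in L^2(-x,x)$ satisfies $\int_{-x}^x\tilde K(x,t)\cos\omega t\,dt=\int_{-x}^x\tilde K(x,t)\sin\omega t\,dt=0$ for all real $\omega$, then $\tilde K\equiv0$ by Fourier inversion, and its Legendre coefficients vanish by orthogonality. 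Finally, watch the parity bookkeeping: $\int_{-x}^{x}P_n(t/x)\cos\omega t\,dt$ vanishes for odd $n$, the correct identities being $\int_{-1}^{1}P_{2n}(s)\cos(zs)\,ds=2(-1)^nj_{2n}(z)$ and $\int_{-1}^{1}P_{2n+1}(s)\sin(zs)\,ds=2(-1)^nj_{2n+1}(z)$; this is exactly why only even-indexed $\beta$'s appear in \eqref{c} and odd-indexed ones in \eqref{s}, and your displayed formula with the factor $2x/i^{n}$ is incorrect as stated for odd $n$.
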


Denote by $K_{N}$ and $K_{1,N}$ partial sums of the series
\eqref{K and K1 Legendre}. Let $Q\in W_{\infty}^{p}[0,b]$, $p\ge0$. Define
\begin{equation}
\label{M and M1}M:=\sup_{0<x\le b}\| \partial_{t}^{p+1} K(x,\cdot
)\|_{L_{\infty}(-x,x)}\qquad\text{and}\qquad M_{1}:=\sup_{0<x\le b}\|
\partial_{t}^{p} K_{1}(x,\cdot)\|_{L_{\infty}(-x,x)}.
\end{equation}
As was mentioned earlier, both suprema exist and are finite numbers. The
following convergence rate estimates hold.

\begin{proposition}
Let $Q\in W_{\infty}^{p}[0,b]$, $p\geq0$. Then there exist constants $c_{p}$
and $d_{p}$ independent of $Q$ and $N$, such that for all $x>0$ the following
inequalities hold
\begin{equation}
\Vert K(x,\cdot)-K_{N}(x,\cdot)\Vert_{L_{2}(-x,x)}\leq\frac{c_{p}Mx^{p+3/2}%
}{N^{p+1}},\qquad N\geq p+2,\label{Estimate K minus K_N}%
\end{equation}
and
\begin{equation}
\Vert K_{1}(x,\cdot)-K_{1,N}(x,\cdot)\Vert_{L_{2}(-x,x)}\leq\frac{d_{p}%
M_{1}x^{p+1/2}}{N^{p}},\qquad N\geq p+1.\label{Estimate K1 minus K_1N}%
\end{equation}

\end{proposition}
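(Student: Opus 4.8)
The plan is to estimate the $L_2$-error of a truncated Fourier--Legendre expansion by a standard argument: bound the tail of the Legendre coefficients of a smooth function in terms of the $L_\infty$-norm of a derivative. Since by the previous theorem $\beta_n(x)$ and $\gamma_n(x)$ are (up to the normalization $\tfrac{2n+1}{2}$ and the scaling $t\mapsto t/x$) exactly the Fourier--Legendre coefficients of $K(x,\cdot)$ and $K_1(x,\cdot)$ on $[-x,x]$, Parseval gives
\[
\|K(x,\cdot)-K_N(x,\cdot)\|_{L_2(-x,x)}^2=\sum_{n=N+1}^\infty\frac{2x}{2n+1}\Bigl(\frac{2n+1}{2x}\int_{-x}^x K(x,t)P_n(t/x)\,dt\Bigr)^2,
\]
so everything reduces to a decay estimate for the coefficients $a_n(x):=\tfrac{2n+1}{2x}\int_{-x}^x K(x,t)P_n(t/x)\,dt$.

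First I would rescale to the fixed interval $[-1,1]$: writing $\kappa(x,\xi):=K(x,x\xi)$, the hypothesis $Q\in W_\infty^p[0,b]$ together with the smoothing property of the transmutation kernel (stated in the excerpt: $K(x,\cdot)\in W_\infty^{p+1}[-x,x]$ with $\|\partial_t^{p+1}K(x,\cdot)\|_{L_\infty(-x,x)}$ bounded on $[0,b]$) shows $\kappa(x,\cdot)\in W_\infty^{p+1}[-1,1]$ with $\|\partial_\xi^{p+1}\kappa(x,\cdot)\|_{L_\infty(-1,1)}\le M x^{p+1}$. Then I would invoke the classical estimate for Legendre coefficients of a $C^{m}$ (or $W_\infty^m$) function: if $\phi\in W_\infty^m[-1,1]$, its Legendre coefficients $\hat\phi_n=\tfrac{2n+1}{2}\int_{-1}^1\phi(\xi)P_n(\xi)\,d\xi$ satisfy $|\hat\phi_n|\le C_m\|\phi^{(m)}\|_{L_\infty}/n^{m-1/2}$ for $n\ge m$ (this follows by integrating by parts $m$ times using $\int P_n = $ combinations of $P_{n\pm1}$, i.e. the identity $(2n+1)P_n=P_{n+1}'-P_{n-1}'$, and the bound $|P_n(\xi)|\le 1$; the loss of the $n^{1/2}$ comes from the normalizing factors). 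Applying this with $m=p+1$ and $\phi=\kappa(x,\cdot)$, one gets $|a_n(x)|\le C_p M x^{p+1}/n^{p+1/2}$. Summing the Parseval tail, $\sum_{n>N}\tfrac{2x}{2n+1}|a_n(x)|^2\le C_p^2 M^2 x^{2p+3}\sum_{n>N}n^{-2p-2}\le C_p'^2 M^2 x^{2p+3}/N^{2p+1}$ for $N\ge p+2$, and taking square roots yields \eqref{Estimate K minus K_N} with an appropriate $c_p$.

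The estimate \eqref{Estimate K1 minus K_1N} is proved in exactly the same way, the only change being that $K_1(x,\cdot)=\partial_x K(x,\cdot)$ is only one degree smoother than $Q$, i.e. $K_1(x,\cdot)\in W_\infty^p[-x,x]$, so after rescaling $\kappa_1(x,\xi):=K_1(x,x\xi)$ lies in $W_\infty^p[-1,1]$ with $\|\partial_\xi^p\kappa_1\|_{L_\infty}\le M_1 x^p$; running the same Legendre-coefficient argument with $m=p$ gives $|\gamma_n(x)/x|\cdot(2x/(2n+1))^{1/2}$-type bounds producing the exponent $N^{-p}$ and the power $x^{p+1/2}$. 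The one point that needs a little care — and is the main (mild) obstacle — is bookkeeping the $x$-powers through the rescaling: the substitution $t=x\xi$ contributes a Jacobian factor $x$ in the integral defining $a_n$, the $m$-fold differentiation in $\xi$ brings out $x^{m}$ relative to differentiation in $t$, and the Parseval weights carry another $x$; one must track these so that the final exponents $p+3/2$ and $p+1/2$ come out exactly as claimed, and check that the resulting constants $c_p,d_p$ depend only on $p$ (through the combinatorial constants $C_m$ in the Legendre-coefficient bound and the fixed geometric sums), not on $Q$, $x$ or $N$. The boundary terms generated by the integrations by parts vanish because $(2n+1)^{-1}(P_{n+1}'-P_{n-1}')$ integrates to $P_{n+1}-P_{n-1}$ which is zero at $\xi=\pm1$ for $n\ge1$; this is what forces the restrictions $N\ge p+2$ and $N\ge p+1$.
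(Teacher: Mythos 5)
Your setup (rescaling to $[-1,1]$, tracking the powers of $x$, reducing everything to a statement about Fourier--Legendre expansions) is sound, and the $x$-exponents $p+3/2$ and $p+1/2$ do come out correctly. The genuine gap is in the rate in $N$: the term-by-term strategy cannot deliver $N^{-(p+1)}$. With only $\phi^{(m)}\in L_\infty[-1,1]$, the individual-coefficient bound you quote, $|\hat\phi_n|\le C_m\|\phi^{(m)}\|_{L_\infty}n^{-(m-1/2)}$, is essentially the best available (each integration by parts via $(2n+1)P_n=P_{n+1}'-P_{n-1}'$ gains one power of $n$, and the final $\|P_{n\pm1}\|_{L_1}\lesssim n^{-1/2}$ supplies only half a power more). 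Feeding this into the Parseval tail gives
\[
\sum_{n>N}\frac{2x}{2n+1}|a_n(x)|^2\lesssim M^2x^{2p+3}\sum_{n>N}n^{-2p-2}\lesssim \frac{M^2x^{2p+3}}{N^{2p+1}},
\]
exactly as you wrote; but the square root of $N^{-(2p+1)}$ is $N^{-(p+1/2)}$, not $N^{-(p+1)}$. So your argument proves \eqref{Estimate K minus K_N} only with $N^{p+1/2}$ in the denominator (and \eqref{Estimate K1 minus K_1N} with $N^{p-1/2}$), i.e.\ it falls short of the stated rate by a factor $N^{1/2}$. This half-power loss is intrinsic to bounding each coefficient by its worst case and then summing squares; it cannot be repaired by sharpening constants.

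The paper's proof sidesteps the coefficients entirely: since the truncated Fourier--Legendre series $g_N$ of $g(z):=K(x,xz)$ coincides with the best $L_2(-1,1)$ approximation to $g$ by polynomials of degree $\le N$, one may invoke a Jackson-type theorem in $L_2$ (Theorem 6.2 of DeVore--Lorentz), namely $\|g-g_N\|_{L_2}\le \tilde c_pN^{-(p+1)}\omega(g^{(p+1)},1/N)_2\le 2\tilde c_pN^{-(p+1)}\|g^{(p+1)}\|_{L_2}$, which yields the full rate directly from $g\in W_2^{p+1}[-1,1]$; the scaling identities $\|K(x,\cdot)-K_N(x,\cdot)\|_{L_2(-x,x)}=\sqrt x\,\|g-g_N\|_{L_2(-1,1)}$ and $g^{(p+1)}(z)=x^{p+1}\partial_t^{p+1}K(x,t)\big|_{t=xz}$ then produce $x^{p+3/2}$ exactly as in your bookkeeping. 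If you insist on a Parseval route, you would need to distribute the $L_2$ mass over coefficients (e.g.\ using that the $P_n$ are eigenfunctions of $u\mapsto -((1-z^2)u')'$ with eigenvalue $n(n+1)$) rather than bound each coefficient separately.
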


\begin{proof}
Let $x>0$ be fixed. Consider functions $g(z):=K(x,xz)$ and $g_{N}%
(z):=K_{N}(x,xz)$ defined on $[-1,1]$. The function $g_{N}$ is a partial sum
of the Fourier-Legendre series for the function $g$, hence $g_{N}$ coincides
with the $N$-th order polynomial best $L_{2}$ approximation of the function
$g$. Since the integral kernel $K$ possesses $p+1$ derivatives with respect to
the second variable with the last derivative belonging to $L_{\infty}(-x,x)$,
we have that $g\in W_{\infty}^{p+1}[-1,1]\subset W_{2}^{p+1}[-1,1]$. Theorem
6.2 from \cite{DeVoreLorentz} states that there exists a universal constant
$\tilde{c}_{p}$ such that for every $N>p+1$
\[
\Vert g-g_{N}\Vert_{L_{2}(-1,1)}\leq\frac{\tilde{c}_{p}}{N^{p+1}}\omega\left(
g^{(p+1)},\frac{1}{N}\right)  _{2}\leq\frac{2\tilde{c}_{p}}{N^{p+1}}\Vert
g^{(p+1)}\Vert_{L_{2}(-1,1)},
\]
where $\omega$ is the modulus of continuity.

By the definition $g^{(p+1)}(z) = x^{p+1} \partial_{t}^{p+1}K(x,t)\big|_{t=xz}%
=:x^{p+1} K_{2}^{(p+1)}(x,xz)$, hence
\[%
\begin{split}
\|K(x,\cdot)-K_{N}(x,\cdot)\|_{L_{2}(-x,x)}  & = \sqrt{x}\|g-g_{N}%
\|_{L_{2}(-1,1)}\le\frac{2 \tilde c_{p} \sqrt x}{N^{p+1}} \|g^{(p+1)}%
\|_{L_{2}(-1,1)}\\
& =\frac{2 \tilde c_{p} x^{p+3/2}}{N^{p+1}} \|K_{2}^{(p+1)}(x,x\cdot
)\|_{L_{2}(-1,1)}\le\frac{2\sqrt2 \tilde c_{p} M x^{p+3/2}}{N^{p+1}},
\end{split}
\]
where we used that $\|K_{2}^{(p+1)}(x,x\cdot)\|_{L_{2}(-1,1)}\le\sqrt{2}M$ due
to \eqref{M and M1}.

The second estimate \eqref{Estimate K1 minus K_1N} can be obtained similarly.
\end{proof}

Estimates \eqref{Estimate K minus K_N} and \eqref{Estimate K1 minus K_1N}
provide upper bounds for the error functions $\varepsilon_{N}$ and
$\varepsilon_{1,N}$ from Theorems
\ref{Th Representation of solutions via Bessel} and
\ref{Th Derivatives Schrod}. Indeed, the approximate solutions $c_{N}$ and
$s_{N}$ are obtained by changing $K$ by $K_{N}$ in \eqref{T def}. Hence by the
Cauchy-Schwarz inequality we have
\begin{equation}%
\begin{split}
|c(\omega,x)-c_{N}(\omega,x)| &  \leq\biggl|\int_{-x}^{x}\bigl(K(x,t)-K_{N}%
(x,t)\bigr)\cos\omega t\,dt\biggr|\\
&  \leq\Vert K(x,\cdot)-K_{N}(x,\cdot)\Vert_{L_{2}(-x,x)}\cdot\biggl(\int
_{-x}^{x}|\cos\omega t|^{2}\,dt\biggr)^{1/2}\leq\frac{\sqrt{2}c_{p}Mx^{p+2}%
}{N^{p+1}}%
\end{split}
\label{c minus cn}%
\end{equation}
for any $\omega\in\mathbb{R}$. The estimates for the function $\varepsilon
_{1,N}$ follows similarly by using corresponding estimates for the derivative
$K_{1}$.

The following proposition provides estimates of the decay rate of the
coefficients $\beta_{k}$ and $\gamma_{k}$ and their behavior near $x=0$.

\begin{proposition}
\label{Prop Decay betas} Let $Q\in W_{\infty}^{p}[0,b]$, $p\geq0$. There exist
constants $C_{p}$ and $D_{p}$ (independent of $N$) such that
\begin{equation}
|\beta_{N}(x)|\leq\frac{C_{p}x^{p+2}}{(N-1)^{p+1/2}},\qquad N\geq
p+1\label{betan est}%
\end{equation}
and
\begin{equation}
|\gamma_{N}(x)|\leq\frac{D_{p}x^{p+1}}{(N-1)^{p-1/2}},\qquad N\geq
p.\label{gamman est}%
\end{equation}

\end{proposition}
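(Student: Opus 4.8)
The plan is to derive the decay estimates \eqref{betan est} and \eqref{gamman est} directly from the Fourier--Legendre characterization in \eqref{betan gamman via K} together with the $L_2$-approximation bounds \eqref{Estimate K minus K_N} and \eqref{Estimate K1 minus K_1N}. The starting observation is that, since $\beta_N(x)$ is (up to the normalizing factor $\frac{2N+1}{2}$) the $N$-th Fourier--Legendre coefficient of $g(z):=K(x,xz)$ on $[-1,1]$, it coincides with the $L_2(-1,1)$ inner product of $g$ against the $N$-th Legendre polynomial $P_N$. Writing $\|P_N\|_{L_2(-1,1)}^2 = \frac{2}{2N+1}$, we get
\[
|\beta_N(x)| = \frac{2N+1}{2}\sqrt{x}\,\left|\langle g, P_N\rangle_{L_2(-1,1)}\right| \cdot \frac{\sqrt x}{\sqrt x} \le \frac{2N+1}{2}\,x\,\|g - g_{N-1}\|_{L_2(-1,1)}\,\|P_N\|_{L_2(-1,1)},
\]
because the coefficient of $P_N$ in $g$ equals the coefficient of $P_N$ in the \emph{error} $g - g_{N-1}$ of the degree-$(N-1)$ truncation (all lower modes cancel). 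This reduces everything to plugging in the already-proven bound on $\|g-g_{N-1}\|_{L_2(-1,1)} = x^{-1/2}\|K(x,\cdot)-K_{N-1}(x,\cdot)\|_{L_2(-x,x)}$.

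First I would make this reduction precise: from \eqref{betan gamman via K} and the substitution $t = xz$,
\[
\beta_N(x) = \frac{2N+1}{2}\,x\int_{-1}^1 g(z)\,P_N(z)\,dz,
\]
and since $\int_{-1}^1 (g(z)-g_{N-1}(z))P_N(z)\,dz = \int_{-1}^1 g(z)P_N(z)\,dz$, the Cauchy--Schwarz inequality gives $|\beta_N(x)| \le \frac{2N+1}{2}\,x\,\|g-g_{N-1}\|_{L_2(-1,1)}\|P_N\|_{L_2(-1,1)} = \frac{2N+1}{2}\,x\cdot\sqrt{\tfrac{2}{2N+1}}\,\|g-g_{N-1}\|_{L_2(-1,1)}$, i.e. $|\beta_N(x)|\le \sqrt{\tfrac{2N+1}{2}}\,x\,\|g-g_{N-1}\|_{L_2(-1,1)}$. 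Then I substitute $\|g-g_{N-1}\|_{L_2(-1,1)} = x^{-1/2}\|K(x,\cdot)-K_{N-1}(x,\cdot)\|_{L_2(-x,x)} \le \frac{c_p M (N-1)^{-(p+1)}x^{p+1}}{1}$ from \eqref{Estimate K minus K_N} (valid for $N-1\ge p+2$, which is slightly stronger than the claimed range $N\ge p+1$ — I would note this and either absorb the first few small-$N$ cases into the constant or observe that the range can be adjusted). Combining, $|\beta_N(x)|\le \sqrt{\tfrac{2N+1}{2}}\,c_p M\,(N-1)^{-(p+1)}x^{p+2} \le C_p x^{p+2}(N-1)^{-(p+1/2)}$, since $\sqrt{2N+1}\le \mathrm{const}\cdot\sqrt{N-1}$ for $N\ge 2$. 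This is exactly \eqref{betan est}.

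The argument for $\gamma_N$ is identical in structure, using the second halves of \eqref{K and K1 Legendre}--\eqref{betan gamman via K} and the bound \eqref{Estimate K1 minus K_1N}: one gets $|\gamma_N(x)|\le \sqrt{\tfrac{2N+1}{2}}\,x\cdot x^{-1/2}\|K_1(x,\cdot)-K_{1,N-1}(x,\cdot)\|_{L_2(-x,x)} \le \sqrt{\tfrac{2N+1}{2}}\,d_p M_1 (N-1)^{-p}x^{p+1} \le D_p x^{p+1}(N-1)^{-(p-1/2)}$, which is \eqref{gamman est}. There is no serious obstacle here; the only points requiring a little care are (i) matching the admissible ranges of $N$ between the approximation estimates and the claimed estimates (shifting index from $N-1$ to $N$ costs one in the threshold, so one must either state $N\ge p+2$ for $\beta$ and $N\ge p+1$ for $\gamma$, or note that the finitely many remaining indices are harmless for the existence of a constant), and (ii) being careful that $g_{N-1}$, the degree-$(N-1)$ Legendre truncation, is what kills all modes $0,\dots,N-1$, so that $P_N$ pairs only with the tail — which is the whole point of using the error of the $(N-1)$-th partial sum rather than the $N$-th. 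I would also remark that $\|P_N\|_{L_2(-1,1)}^2 = 2/(2N+1)$ is the standard Legendre normalization and needs no proof.
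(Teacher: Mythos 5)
Your proposal is correct and follows essentially the same route as the paper's proof: both exploit the orthogonality of $P_N$ to polynomials of degree less than $N$ to replace $K$ by the truncation error $K-K_{N-1}$, apply Cauchy--Schwarz with $\Vert P_N\Vert_{L_2}^2=2/(2N+1)$, and insert the bound \eqref{Estimate K minus K_N} (the paper works directly on $[-x,x]$ rather than rescaling to $[-1,1]$, which is purely cosmetic). Your remark about the slight mismatch in the admissible range of $N$ is a fair observation that the paper itself glosses over, and your fix (absorbing finitely many indices into the constant) is the right one.
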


\begin{proof}
We obtain using \eqref{betan gamman via K} and \eqref{Estimate K minus K_N},
the fact that the Legendre polynomial $P_{n}$ is orthogonal to any polynomial
of degree less than $N$ and the Cauchy-Schwarz inequality that
\[%
\begin{split}
|\beta_{N}(x)| &  =\frac{2N+1}{2}\biggl|\int_{-x}^{x}K(x,t)P_{N}\left(
\frac{t}{x}\right)  \,dt\biggr|\leq\frac{2N+1}{2}\int_{-x}^{x}\left\vert
(K(x,t)-K_{N-1}(x,t))P_{N}\left(  \frac{t}{x}\right)  \right\vert \,dt\\
&  \leq\frac{2N+1}{2}\Vert K(x,\cdot)-K_{N-1}(x,\cdot)\Vert_{L_{2}(-x,x)}%
\cdot\sqrt{\frac{2x}{2N+1}}\leq\sqrt{\frac{2N+1}{2}}\frac{c_{p}Mx^{p+2}%
}{(N-1)^{p+1}}.
\end{split}
\]
The proof of the second estimate \eqref{gamman est} is similar.
\end{proof}

The following bound for the behavior of the first coefficients near $x=0$ is valid.

\begin{corollary}
\label{Corr decay betas} Let $Q \in W_{\infty}^{p}[0,b]$, $p\ge0$. Then
\begin{align}
|\beta_{n}(x)| & \le c_{n} x^{n+1}, \qquad n\le p+1,\label{betan small n}\\
|\gamma_{n}(x)| & \le d_{n} x^{n+1},\qquad n\le p,\label{gamman small n}%
\end{align}
where $c_{n}$ and $d_{n}$ are some constants dependent on $Q$.
\end{corollary}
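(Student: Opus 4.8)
The plan is to read the bounds \eqref{betan small n}--\eqref{gamman small n} directly off the integral representation \eqref{betan gamman via K}, using that the transmutation kernel $K$ (respectively $K_{1}$) is smoother in the second variable than $Q$ is, together with the orthogonality of the Legendre polynomial $P_{n}$ to every polynomial of degree less than $n$. Thus the smoothness of $Q$ will be converted into a high order of vanishing of $\beta_{n}$, $\gamma_{n}$ at $x=0$.

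Concretely, I would fix $x\in(0,b]$, substitute $t=xu$ in \eqref{betan gamman via K} to get $\beta_{n}(x)=\tfrac{(2n+1)x}{2}\int_{-1}^{1}K(x,xu)P_{n}(u)\,du$, and set $\phi(u):=K(x,xu)$. Since $Q\in W_{\infty}^{p}[0,b]$ the kernel obeys $K(x,\cdot)\in W_{\infty}^{p+1}[-x,x]$, hence $\phi\in W_{\infty}^{n}[-1,1]$ as soon as $n\le p+1$. Let $T_{n-1}$ be the Taylor polynomial of $\phi$ at $u=0$ of degree $n-1$ (the zero polynomial when $n=0$); as $\deg T_{n-1}<n$ we have $\int_{-1}^{1}T_{n-1}(u)P_{n}(u)\,du=0$, so $\beta_{n}(x)=\tfrac{(2n+1)x}{2}\int_{-1}^{1}\bigl(\phi(u)-T_{n-1}(u)\bigr)P_{n}(u)\,du$. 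The Taylor remainder is bounded in the usual way by $|\phi(u)-T_{n-1}(u)|\le\tfrac{|u|^{n}}{n!}\|\phi^{(n)}\|_{L_{\infty}(-1,1)}=\tfrac{|u|^{n}x^{n}}{n!}\|\partial_{t}^{n}K(x,\cdot)\|_{L_{\infty}(-x,x)}$, and combining $|u|^{n}\le1$ on $[-1,1]$ with the Cauchy--Schwarz estimate $\int_{-1}^{1}|P_{n}(u)|\,du\le\sqrt{2}\,\|P_{n}\|_{L_{2}(-1,1)}=\tfrac{2}{\sqrt{2n+1}}$ gives
\[
|\beta_{n}(x)|\le\frac{\sqrt{2n+1}}{n!}\Bigl(\sup_{0<x\le b}\|\partial_{t}^{n}K(x,\cdot)\|_{L_{\infty}(-x,x)}\Bigr)x^{n+1},\qquad n\le p+1 ,
\]
which is \eqref{betan small n} with $c_{n}$ depending on $Q$ through that supremum. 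The estimate \eqref{gamman small n} follows verbatim after replacing $K$ by $K_{1}$ and using $K_{1}(x,\cdot)\in W_{\infty}^{p}[-x,x]$, so that the same computation is valid for $n\le p$ and produces $d_{n}=\tfrac{\sqrt{2n+1}}{n!}\sup_{0<x\le b}\|\partial_{t}^{n}K_{1}(x,\cdot)\|_{L_{\infty}(-x,x)}$.

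The only point needing care is the finiteness of $\sup_{0<x\le b}\|\partial_{t}^{n}K(x,\cdot)\|_{L_{\infty}(-x,x)}$ for $0\le n\le p+1$ and of the analogous quantities for $K_{1}$ up to order $p$; this is exactly the transmutation-kernel regularity recalled in this appendix ($K$ one derivative smoother than $Q$, $K_{1}$ as smooth as $Q$), and it is what pins down the admissible ranges $n\le p+1$ and $n\le p$ in the statement. Everything else is elementary. (An alternative is to deduce \eqref{betan small n}--\eqref{gamman small n} from Proposition \ref{Prop Decay betas} applied with the smoothness index $n-1$ in place of $p$, after treating the lowest indices by hand --- e.g.\ $\beta_{0}=(f-1)/2$ yields $|\beta_{0}(x)|\le\tfrac12\|f'\|_{L_{\infty}[0,b]}x$, and similarly for $\gamma_{0}$ --- but the integral-representation argument above handles all $n$ at once and avoids the degenerate small-index bookkeeping.)
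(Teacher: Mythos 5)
Your argument is correct, and it is a genuinely different (and in fact more self-contained) route than the one the paper intends. The paper offers no written proof: the statement is a corollary of Proposition \ref{Prop Decay betas}, and the intended derivation is to lower the smoothness index from $p$ to $n-1$ (legitimate since $W_{\infty}^{p}\subset W_{\infty}^{n-1}$ for $n\le p+1$) and take $N=n$, which produces the $x^{n+1}$ factor directly from \eqref{betan est}--\eqref{gamman est}. That derivation, however, degenerates for the lowest indices (for $n=1$ the factor $(N-1)^{p+1/2}$ in the denominator vanishes), which is exactly the bookkeeping you flag in your parenthetical remark. Your direct argument --- rescale \eqref{betan gamman via K} to $[-1,1]$, use orthogonality of $P_{n}$ to the degree-$(n-1)$ Taylor polynomial of $K(x,x\cdot)$, and bound the Taylor remainder by $\tfrac{|u|^{n}x^{n}}{n!}\|\partial_{t}^{n}K(x,\cdot)\|_{L_{\infty}}$ --- is the same underlying mechanism as the proposition's proof (orthogonality to low-degree polynomials converts smoothness of the kernel into decay), but it replaces the best $L_{2}$ polynomial approximant and the DeVore--Lorentz Jackson-type theorem by an explicit Taylor competitor. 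What this buys you: all indices $n\ge0$ are handled uniformly with explicit constants, and no external approximation theorem is needed; what it costs is nothing beyond the uniform-in-$x$ finiteness of $\sup_{0<x\le b}\|\partial_{t}^{n}K(x,\cdot)\|_{L_{\infty}(-x,x)}$ for the intermediate orders $n\le p+1$ (and $n\le p$ for $K_{1}$), which the paper records explicitly only for the top order but which follows from the same kernel regularity (continuity of the lower derivatives on the closed triangle, or the Goursat data \eqref{K GoursatCond}, \eqref{K1 GoursatCond} plus integration of the top derivative); this is on the same footing as what the paper itself takes for granted in \eqref{M and M1}. Your numerical constants check out: $\tfrac{2n+1}{2}\cdot\tfrac{2}{\sqrt{2n+1}}=\sqrt{2n+1}$, giving $c_{n}=\tfrac{\sqrt{2n+1}}{n!}\sup_{0<x\le b}\|\partial_{t}^{n}K(x,\cdot)\|_{L_{\infty}(-x,x)}$.
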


Sufficient conditions for the smoothness of the transformed potential $Q$ in
terms of the coefficients $p$, $q$ and $r$ can be easily obtained from \eqref{Q}.

\end{document}